\numberwithin{equation}{subsection}
\theoremstyle{definition}
\newtheorem{lemma}[equation]{Lemma}
\newtheorem{corollary}[equation]{Corollary}
\newtheorem{definition}[equation]{Definition}
\newtheorem{proposition}[equation]{Proposition}
\newtheorem{remark}[equation]{Remark}
\newtheorem*{theorem1}{Theorem 1}
\newtheorem*{theorem2}{Theorem 2}
\renewcommand{\phi}{\varphi}
\newcommand{\ti}{\tilde}
\renewcommand{\(}{\bigl(}
\renewcommand{\)}{\bigr)\vphantom{)}}
\newcommand{\ip}[2]{\langle#1,#2\rangle}
\newcommand{\imp}{$ \Longrightarrow $ }
\newcommand{\Cl}{\operatorname{Cl}}
\newcommand{\One}{{1\hskip-2.5pt{\rm l}}}
\newcommand{\si}{\sigma}
\newcommand{\ga}{\gamma}
\newcommand{\om}{\omega}
\newcommand{\Om}{\Omega}
\newcommand{\de}{\delta}
\newcommand{\al}{\alpha}
\newcommand{\be}{\beta}
\newcommand{\Ec}{\mathcal E}
\newcommand{\F}{\mathcal F}
\newcommand{\La}{\Lambda}
\newcommand{\Ex}{\mathbb E\,}
\newcommand{\R}{\mathbb R}
\renewcommand{\Pr}[1]{\mathbb{P}\mskip1.5mu\(\mskip1.5mu#1\mskip1.5mu\)}
\newcommand{\cE}[2]{\mathbb{E}\mskip1.5mu\(\mskip1.5mu#1\mskip1.5mu
 \big|\mskip1.5mu#2\mskip1.5mu\)}
\newcommand{\sif}{$\sigma$\nobreakdash-field}
\newcommand{\measurable}[1]{$#1$\nobreakdash-\hspace{0pt}measurable}
\newcommand{\dimensional}[1]{$#1$\nobreakdash-\hspace{0pt}dimensional}
\begin{document}

\title{Noise as a Boolean algebra of \sif s. I. Completion}

\author{Boris Tsirelson}

\date{}
\maketitle

\begin{abstract}
Nonclassical noises over the plane (such as the black noise of
percolation) consist of \sif s corresponding to some planar
domains. One can treat less regular domains as limits of more regular
domains, thus extending the noise and its set of \sif s. The greatest
extension is investigated in a new general framework.
\end{abstract}

\setcounter{tocdepth}{2}
\tableofcontents

\section*{Introduction}
\addcontentsline{toc}{section}{Introduction}
A noise is defined as a family of \sif s (in other words,
$\si$-algebras) contained in the \sif\ of a probability space and
satisfying some conditions. Initially, these \sif s were indexed by
intervals of the real line (the time axis). Recently, a spectacular
progress in understanding the full scaling limit of the critical
planar percolation \cite{SS} have lead to an important noise over the
plane (the black noise of percolation). Its \sif s are indexed by
planar domains with finite-length boundary; ``the regularity
assumption can be considerably weakened (though it cannot be
dropped)'' \cite[Remark 1.8]{SS}. The needed regularity of the domains
depends on some properties of the noise. For a classical (white,
Poisson or their combination) noise over $ \R^n $, the family of \sif
s extends naturally from nice domains to arbitrary Lebesgue measurable
subsets of $ \R^n $. What happens to a nonclassical (in particular,
black) noise? One may hope that it extends naturally to the greatest
class of subsets of $ \R^n $ acceptable for the given noise. For now,
nothing like that is proved, nor even conjectured.

It is worth to split the problem in two:

(a) enlarge the given set of \sif s (irrespective of their relation to
the domains in $ \R^n $);

(b) extend the given correspondence between the domains and the \sif
s.

Only the former problem, (a), is treated in this work.

A noise over $ \R $ extends readily from intervals to their finite
unions, which leads to a lattice homomorphism from the Boolean algebra
$ A $ of finite unions of intervals modulo finite sets to the lattice
$ \La $ of all sub-\sif s\footnote{%
 Each \sif\ is assumed to contain all null sets.}
of the \sif\ $ \F $ of a given probability space $ (\Om,\F,P) $:
\[
\F_{a\cap b} = \F_a \cap \F_b \, , \quad \F_{a\cup b}
= \F_a \vee \F_b \quad \text{for } a,b \in A \, ;
\]
here $ \F_a \vee \F_b $ is the least \sif\ containing both $ \F_a $
and $ \F_b $. The image $ B = \{ \F_a : a \in A \} $ is necessarily a
sublattice of $ \La $ and a Boolean algebra\footnote{%
 That is, Boolean lattice. I do not write ``Boolean sublattice''
 because the lattice of all \sif s is not Boolean.}
such that for all $ a,b \in A $,
\[
\text{if} \quad \F_a \cap \F_b
= \F_\emptyset \quad \text{then} \quad \F_a, \F_b \text{ are
independent}
\]
(that is, $ P(X\cap Y) = P(X) P(Y) $ for all $ X \in \F_a $, $
Y \in \F_b $). Thus $ B $ is an example to the following definition.

We define a \emph{noise-type Boolean algebra} (of \sif s) as a
sublattice $ B $ of $ \La $, containing the trivial \sif\ (only null
sets and their complements) and the whole $ \F $, such that $ B $ is a
Boolean algebra, and any two \sif s of $ B $ are independent whenever
their intersection is the trivial \sif.\footnote{%
 Homogeneity (that is, shift invariance) of a noise is ignored here.}

For the black noise of percolation we may start with the Boolean
algebra $ A_1 $ of finite unions of \dimensional{2} intervals $
(s_1,t_1) \times (s_2,t_2) \subset \R^2 $ modulo finite unions of
horizontal and vertical straight lines; or alternatively, the Boolean
algebra $ A_2 $ of all sets with finite-length boundary, modulo
finite-length sets; $ A_1 \subset A_2 $. We get two noise-type Boolean
algebras, $ B_1 \subset B_2 $. For every $ a \in A_2 $ there exist $
a_n \in A_1 $ such that $ a_n \uparrow a $ and therefore
$ \F_{a_n} \uparrow \F_a $; thus, the pair $ B_1, B_2 $ is an example
to the following definition.

Let $ B_1 \subset B_2 \subset \La $ be two noise-type Boolean
algebras. We say that $ B_1 $ is \emph{monotonically dense} in $ B_2 $
if every monotonically closed subset of $ \La $ containing $ B_1 $
contains also $ B_2 $. Here a subset $ Z \subset \La $ is called
monotonically closed if, first, $ Z $ contains $ \cap_n \Ec_n $ for
every decreasing sequence of \sif s $ \Ec_n \in Z $, and second, $ Z $
contains $ \vee_n \Ec_n $ (the least \sif\ containing all $ \Ec_n $)
for every increasing sequence of \sif s $ \Ec_n \in Z $.

We define the \emph{noise-type completion} of a noise-type Boolean
algebra $ B \subset \La $ as the greatest among all noise-type Boolean
algebras $ C \subset \La $ such that $ B \subset C $ and $ B $ is
monotonically dense in $ C $.

It appears that the greatest among these $ C $ exists and can be
described explicitly.

\begin{theorem1}
Every noise-type Boolean algebra has the noise-type completion.
\end{theorem1}

\begin{theorem2}
Let $ B \subset \La $ be a noise-type Boolean algebra. Denote by $ C $
its noise-type completion, and by $ \ti B $ the least monotonically
closed subset of $ \La $ containing $ B $. Then an arbitrary \sif\ $
\Ec \in \La $ belongs to $ C $ if and only if it satisfies the
following two conditions:

(a) $ \Ec \in \ti B $;

(b) $ \Ec $ has a complement $ \Ec' $ in $ \ti B $; that is,
$ \Ec' \in \ti B $, $ \Ec \cap \Ec' $ is the trivial \sif, and
$ \Ec \vee \Ec' $ is the whole $ \F $ (such $ \Ec' $ is necessarily
unique).
\end{theorem2}

Note that $ C $ is uniquely determined by $ \ti B $.

\section[The complete lattice of $\si$-fields]{\raggedright The complete
 lattice of \sif s}
\label{sec:1}
\subsection{Preliminaries: type $ L_2 $ subspaces}
\label{1a}

Let $ (\Om,\F,P) $ be a probability space, and $ H = L_2(\Om,\F,P) $
the corresponding Hilbert space, assumed to be separable. The
following two conditions on a (closed linear) subspace $ H_1 $ of $ H
$ are equivalent \cite[Th.~3]{Si}:

(a) $ H_1 $ is a sublattice of $ H $, containing constants. That is, $
H_1 $ contains $ f \vee g $ and $ f \wedge g $ for all $ f,g \in H_1
$, where $ (f \vee g)(\om) = \max\(f(\om),g(\om)\) $ and $ (f \wedge
g)(\om) = \min\(f(\om),g(\om)\) $; and $ H_1 $ contains the
one-dimensional space of constant functions.

(b) There exists a sub-\sif\ $ \F_1 \subset \F $ such that $ H_1 =
L_2(\F_1) $, the space of all \measurable{\F_1} functions of $ H $.

Such subspaces $ H_1 $ will be called type $ L_2 $ (sub)spaces. (In
\cite{Si} they are called measurable, which can be confusing.)

Each sub-\sif\ $ \F_1 \subset \F $ is assumed to contain all null
sets. Then, the relation $ H_1 = L_2(\F_1) $ establishes a bijective
correspondence between type $ L_2 $ subspaces of $ H $ and sub-\sif s of $
\F $. This correspondence is evidently isotone,
\[
H_1 \subset H_2 \quad \text{if and only if} \quad \F_1 \subset \F_2 \, .
\]
Thus, we may define a partially ordered set $ \La = \La(\Om,\F,P) $ in
two equivalent ways: as consisting of all type $ L_2 $ subspaces of $ H $,
or alternatively, of all sub-\sif s of $ \F $; up to isomorphism, it
is the same $ \La $. An element $ x \in \La $ may be thought of as a
type $ L_2 $ subspace $ H_x \subset H $ or a sub-\sif\ $ \F_x \subset \F $;
$ H_x = L_2(\F_x) $.

The set $ \La $ contains the greatest element $ 1 $ (the whole $ H $, or
the whole $ \F $) and the least element $ 0 $ (the one-dimensional
space of constants, or the trivial \sif, --- only null sets and their
complements).

The infimum exists for every subset of $ \La $, since the intersection
of type $ L_2 $ spaces is a type $ L_2 $ space; alternatively, the intersection
of \sif s is a \sif.

Existence of the supremum follows readily \cite[Th.~2.31]{DP}. It is
the type $ L_2 $ space generated by the union of the given type $ L_2 $
spaces. Alternatively, it is the \sif\ generated by the union of the
given \sif s. (See \cite[Th.~2]{Si}.)

Thus, $ \La $ is a complete lattice. For two elements $ x,y \in \La $
their infimum is denoted by $ x \wedge y $, and supremum by $ x \vee y
$.

\subsection{Bad properties}
\label{1b}

This subsection is not used in the sequel and may be skipped. Its goal
is, to warn the reader against some incorrect arguments that could
suggest themselves.

See \cite[Sect.~4]{DP} about modular and distributive lattices, the
diamond $ M_3 $ and the pentagon $ N_5 $.

\begin{remark}\label{1b2}
The lattice $ \La $ is not modular, and therefore not distributive,
unless it is finite.

If $ (\Om,\F,P) $ consists of only a finite number $ n $ of atoms (and
no nonatomic part) then $ \dim H = n $. For $ n=3 $, $ \La=M_3 $ is the
diamond, modular but not distributive. For $ n=4 $, $ \La $ is not
modular, since it  contains $ N_5 $. Proof: let $ \al,\be,\ga,\de $ be
the four atoms of $ (\Om,\F,P) $, then $ \{ 0,1,u,v,w \} = N_5 $ where
$ u = \si(\al,\ga,\be\cup\de) $, $ v = \si(\al\cup\be,\ga\cup\de) $
and $ w = \si(\al\cup\ga,\be\cup\de) $; here $ \si(\dots) $ is the
\sif\ generated by $ (\dots) $.
\end{remark}

The following two remarks show that the lattice operations, $ (x,y)
\mapsto x \wedge y $ and $ (x,y) \mapsto x \vee y $, generally violate
some natural continuity.

\begin{remark}
It may happen that $ x_n \uparrow x $ (that is, $ x_1 \le x_2 \le
\dots $ and $ \sup_n x_n = x $), $ x_n \wedge y = 0 $ for all $ n $,
but $ x \wedge y = y \ne 0 $.

Proof. Assuming that $ (\Om,\F,P) $ contains $ \al_1, \al_2, \dots \in
\F $ such that $ \al_1 \subset \al_2 \subset \dots $, $ P(\al_1) <
P(\al_2) < \dots $ and $ \lim_n P(\al_n) < 1 $, we introduce $ x_n =
\si(\al_1,\dots,\al_n) $, $ x = \si(\al_1,\al_2,\dots) $ and $ y =
\si(\al) $ where $ \al = \cup_n \al_n $. Then $ x_n \uparrow x $; $
x_n \wedge y = 0 $ (just because $ \al \notin \si(\al_1,\dots,\al_n)
$); and $ x \wedge y = y \ne 0 $ (since $ \al \in
\si(\al_1,\al_2,\dots) $ and $ 0 < P(\al) < 1 $).
\end{remark}

\begin{remark}
It may happen that $ x_n \downarrow 0 $, $ x_n \vee y = 1 $ for all $
n $, but $ y \ne 1 $.

Proof. Assuming that $ (\Om,\F,P) $ is the interval $ [0,1) \subset \R
$ with Lebesgue measure, we introduce the \sif\ $ y $ of all measurable
sets invariant under the transformation $ \om \mapsto 1-\om $, and
(for each $ n $) the \sif\ $ x_n $ of all measurable sets invariant
under the transformation $ \om \mapsto \om+2^{-n} \pmod 1 $. The rest
is left to the reader.
\end{remark}

See also Remark \ref{2a65}.

\subsection{More preliminaries: tensor products}
\label{1c}

The product of two probability spaces leads to the tensor product of
Hilbert spaces (assumed to be separable, as before), see for instance
\cite[Sect. II.4, Th.~II.10]{RS}; that is,
\[
L_2 \( (\Om',\F',P') \times (\Om'',\F'',P'') \) = L_2 (\Om',\F',P')
\otimes L_2 (\Om'',\F'',P'')
\]
in the following sense: the formula $ ( f \otimes g ) (\om',\om'') =
f(\om') g(\om'') $ establishes a unitary operator between these two
Hilbert spaces.

The same situation appears when two sub-\sif s $ \F_1, \F_2 \subset \F
$ are independent. The formula
\[
( f \otimes g ) (\om) = f(\om) g(\om) \quad \text{for } f \in
L_2(\F_1), \, g \in L_2(\F_2), \, \om \in \Om
\]
establishes a unitary operator from $ L_2(\F_1) \otimes L_2(\F_2) $
onto $ L_2 ( \F_1 \vee \F_2 ) $. This operator is the composition of
the operator $ L_2(\F_1) \otimes L_2(\F_2) \to L_2 \( (\Om,\F_1,P)
\times (\Om,\F_2,P) \) $ discussed before, and the operator $ L_2 \(
(\Om,\F_1,P) \times (\Om,\F_2,P) \) \to L_2 ( \Om, \F_1 \vee \F_2, P )
$ conjugated to the measure preserving ``diagonal'' map
\[
( \Om, \F_1 \vee \F_2, P ) \ni \om \mapsto (\om,\om) \in (\Om,\F_1,P)
\times (\Om,\F_2,P) \, .
\]

We need also the equality
\begin{equation}\label{1c2}
( H_{1\mathrm a} \otimes H_{2\mathrm a} ) \cap ( H_{1\mathrm b}
\otimes H_{2\mathrm b} ) = ( H_{1\mathrm a} \cap H_{1\mathrm b} )
\otimes ( H_{2\mathrm a} \cap H_{2\mathrm b} )
\end{equation}
for arbitrary (closed linear) subspaces $ H_{1\mathrm a}, H_{1\mathrm
b} \subset H_1 $ and $ H_{2\mathrm a}, H_{2\mathrm b} \subset H_2 $ of
Hilbert spaces $ H_1, H_2 $. Formula \eqref{1c2} follows easily from
its special case
\[
( H_{1\mathrm a} \otimes H_2 ) \cap ( H_1 \otimes H_{2\mathrm a} ) =
H_{1\mathrm a} \otimes H_{2\mathrm a}
\]
for $ H_{1\mathrm a} \subset H_1 $, $ H_{2\mathrm a} \subset H_2
$. This special case follows from the evident equality
\[
( Q_1 \otimes
\One ) ( \One \otimes Q_2 ) = Q_1 \otimes Q_2 = ( \One \otimes Q_2 ) (
Q_1 \otimes \One ) \, ,
\]
since for two commuting projections, the image of their product is the
intersection of their images.

\subsection{Good properties}
\label{1d}

Elements $ x \in \La $ may be treated as sub-\sif s $ \F_x \subset \F
$ or type $ L_2 $ subspaces $ H_x \subset H = L_2(\Om,\F,P) $, but also
as the corresponding orthogonal projections $ Q_x : H \to H $, $ Q_x H
= H_x $, which gives us some useful structures on $ \La $ not
derivable from the partial order.

The strong operator topology on the projection operators $ Q_x $ gives
us a topology on $ \La $; we call it the strong operator topology on $
\La $. It is metrizable (since the strong operator
topology is metrizable on operators of norm $ \le 1 $). Thus,
\[
x_n \to x \quad \text{means} \quad \forall \psi \in H \;\; \|
Q_{x_n} \psi - Q_x \psi \| \to 0 \, .
\]
Below, ``topologically'' means ``according to the strong operator
topology''.

On the other hand we have the monotone convergence derived from the
partial order on $ \La $:
\begin{gather*}
x_n \downarrow x \quad \text{means} \quad x_1 \ge x_2 \ge \dots \text{
  and } \inf_n x_n = x \, , \\
x_n \uparrow x \quad \text{means} \quad x_1 \le x_2 \le \dots \text{
  and } \sup_n x_n = x \, .
\end{gather*}

\begin{definition}
(a) A set $ Z \subset \La $ is \emph{monotonically closed,} if for all
$ x_n \in Z $ and $ x \in \La $
\begin{gather*}
x_n \downarrow x \quad \text{implies} \quad x \in Z \, , \\
x_n \uparrow x \quad \text{implies} \quad x \in Z \, .
\end{gather*}
(b) Given two subsets $ Z_1 \subset Z_2 \subset \La $, we say that $
Z_1 $ is \emph{monotonically dense} in $ Z_2 $ if every monotonically
closed set containing $ Z_1 $ contains also $ Z_2 $.
\end{definition}

\begin{lemma}\label{1d2}
(a) $ x_n \downarrow x $ implies $ x_n \to x $; also, $ x_n \uparrow x
$ implies $ x_n \to x $;

(b) every set closed in the strong operator topology is monotonically
closed.
\end{lemma}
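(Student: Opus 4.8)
The plan is to prove part~(a) as the standard fact that a monotone sequence of orthogonal projections converges strongly to the projection onto the limiting subspace, and then to obtain part~(b) for free.

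For~(a) I would argue entirely on the projection side, using that the infimum in $\La$ is the intersection of the type $L_2$ subspaces and the supremum is the closed linear span of their union (recalled in Subsection~\ref{1a}). Take the decreasing case $x_n \downarrow x$ first; then $H_{x_1} \supset H_{x_2} \supset \cdots$ and $\bigcap_n H_{x_n} = H_x$, so for $n \le m$ we have $H_{x_m} \subset H_{x_n}$ and hence $Q_{x_n} Q_{x_m} = Q_{x_m}$. Fix $\psi \in H$. Since $Q_{x_m}\psi \in H_{x_m} \subset H_{x_n}$ is fixed by $Q_{x_n}$, the vector $Q_{x_n}\psi - Q_{x_m}\psi$ is orthogonal to $Q_{x_m}\psi$, whence by Pythagoras $\| Q_{x_n}\psi - Q_{x_m}\psi \|^2 = \| Q_{x_n}\psi \|^2 - \| Q_{x_m}\psi \|^2$. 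The numbers $\| Q_{x_n}\psi \|^2$ decrease and are bounded below, hence converge; therefore $(Q_{x_n}\psi)_n$ is Cauchy and converges to some $\phi \in H$. It remains to identify $\phi$ with $Q_x\psi$: the inclusion $\phi \in H_x$ holds because $Q_{x_n}\psi \in H_{x_k}$ for all $n \ge k$ and each $H_{x_k}$ is closed, so $\phi \in \bigcap_k H_{x_k} = H_x$; and $\psi - \phi \perp H_x$ follows by passing to the limit in $\psi - Q_{x_n}\psi \perp H_{x_n} \supset H_x$. The increasing case $x_n \uparrow x$ is dual: now $\overline{\bigcup_n H_{x_n}} = H_x$, the norms $\| Q_{x_n}\psi \|$ increase and are bounded by $\| \psi \|$, the same Pythagorean identity (with the roles of $n$ and $m$ interchanged) again yields a Cauchy sequence, and its limit is identified with $Q_x\psi$ using that $\psi - Q_{x_n}\psi$ is orthogonal to every $H_{x_k}$ with $k \le n$.

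Part~(b) is then immediate. Let $Z \subset \La$ be closed in the strong operator topology, and let $x_n \in Z$ with $x_n \downarrow x$ (respectively $x_n \uparrow x$). By~(a), $x_n \to x$ topologically; since $Z$ is topologically closed and the topology is metrizable, $x \in Z$. Thus both defining conditions of monotone closedness hold, and $Z$ is monotonically closed.

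I expect no serious obstacle. The only step needing a little care is the identification of the Cauchy limit $\phi$ with $Q_x\psi$, which is exactly where the order-theoretic description of $\inf$ and $\sup$ in $\La$ (as intersection and closed span of the union) enters; everything else is routine Hilbert-space bookkeeping.
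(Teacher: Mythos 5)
Your reduction of (b) to (a) and your treatment of the decreasing case are correct and match the paper, which likewise disposes of (b) immediately and invokes monotone convergence of projections (you merely spell out the standard Pythagoras/Cauchy argument that the paper leaves implicit). The gap is in the increasing case, and it sits exactly at the point you yourself flag as the delicate one. You assert that ``the supremum is the closed linear span of their union (recalled in Subsection~\ref{1a})''. That is not what Subsection~\ref{1a} says, and as a general statement it is false: the supremum in $ \La $ is the type $ L_2 $ space \emph{generated by} the union, i.e.\ $ H_{x \vee y} = L_2(\F_x \vee \F_y) $, which is typically much larger than the closed linear span of $ H_x \cup H_y $. For instance, if $ x,y $ are independent and nontrivial, then $ H_{x\vee y} \cong H_x \otimes H_y $ (Subsection~\ref{1c}) contains all products $ fg $ with $ f \in H_x $, $ g \in H_y $, whereas the closed linear span of $ H_x \cup H_y $ contains only limits of sums $ f+g $; a product of two centered, nonzero such functions is orthogonal to all of these.

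For an increasing sequence the identity $ \overline{\cup_n H_{x_n}} = H_x $ is true, but it is precisely the nontrivial content of this half of the lemma and must be proved, not quoted. The inclusion at stake is $ H_x \subset \overline{\cup_n H_{x_n}} $ (the reverse one is obvious), and your identification of the Cauchy limit $ \phi $ with $ Q_x \psi $ needs it: without it you only obtain $ \psi - \phi \perp H_{x_k} $ for every $ k $, not $ \psi - \phi \perp H_x $. The paper closes this in one clause: the closure of $ \cup_n H_{x_n} $, \emph{being a type $ L_2 $ space}, is equal to $ H_x $. The justification is that for an increasing (hence directed) union the closure is a closed sublattice of $ H $ containing the constants --- if $ f_k \to f $ and $ g_k \to g $ with $ f_k, g_k \in H_{x_{n_k}} $ (a common index, by directedness), then $ f_k \vee g_k \in H_{x_{n_k}} $ and $ f_k \vee g_k \to f \vee g $ in $ L_2 $, and similarly for $ \wedge $ --- so by \v Sid\'ak's characterization (condition (a) of Subsection~\ref{1a}) it is a type $ L_2 $ space; since it contains every $ H_{x_n} $ and is contained in every type $ L_2 $ space containing them all, it is the supremum $ H_x $. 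Once you insert this observation, your argument is complete and coincides with the paper's.
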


\begin{proof}
Clearly, (b) follows from (a); we have to prove (a). 

First, if $ x_n \downarrow x $ then $ H_{x_1} \supset H_{x_2} \supset
\dots $ and $ \cap_n H_{x_n} = H_x $, therefore $ Q_{x_n} \to Q_x $
strongly. Second, if $ x_n \uparrow x $ then $ H_{x_1} \subset H_{x_2}
\subset \dots $; the closure of $ \cup_n H_{x_n} $, being a type $ L_2
$ space, is equal to $ H_x $; therefore $ Q_{x_n} \to Q_x $ strongly.
\end{proof}

\begin{definition}\label{1d3}
Elements $ x,y \in \La $ are \emph{commuting,}\footnote{%
 Not to be confused with the notion mentioned in \cite[Chap.~II,
 Sect.~14, p.~52]{Bi}.}
if $ Q_x Q_y = Q_y Q_x $. A subset of $ \La $ is \emph{commutative,}
if its elements are pairwise commuting.
\end{definition}

Note that
\begin{gather}
\text{the topological closure of a commutative set is
 commutative,} \label{1d33} \\
\text{if } x,y \in \La \text{ are commuting then } Q_x Q_y =
Q_{x\wedge y} \, . \label{1d37}
\end{gather}

\begin{proposition}\label{1d4a}
Let
\[
B \subset C \subset \La \, , \quad B \text{ is commutative,} \quad
\forall x,y \in B \;\; x \wedge y \in B \, .
\]
Then $ B $ is monotonically dense in $ C $ if and only if $ B $ is
topologically dense in $ C $ (that is, $ C $ is contained in the
topological closure of $ B $).
\end{proposition}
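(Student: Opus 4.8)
The plan is to reduce the biconditional to a single set equality. I would write $\bar B$ for the closure of $B$ in the strong operator topology and $\ti B$ for the least monotonically closed set containing $B$ (the intersection of all monotonically closed sets containing $B$; this intersection is again monotonically closed, so $\ti B$ is well defined, and it is the same $\ti B$ as in Theorem 2). Unwinding the definitions, $B$ is monotonically dense in $C$ precisely when $C\subseteq\ti B$, and topologically dense in $C$ precisely when $C\subseteq\bar B$. Hence it is enough to prove $\ti B=\bar B$; the commutativity and the closure under finite meets will be needed only for one of the two inclusions.

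The inclusion $\ti B\subseteq\bar B$ is immediate and uses neither hypothesis: by Lemma \ref{1d2}(b) the set $\bar B$ is monotonically closed and it contains $B$, hence it contains the least such set $\ti B$. (This already gives ``monotonically dense $\Rightarrow$ topologically dense''.)

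For the reverse inclusion $\bar B\subseteq\ti B$ I would fix $x\in\bar B$ and, using metrizability of the strong operator topology, choose $x_n\in B$ with $x_n\to x$. By \eqref{1d33} the set $\bar B$ is commutative, so $Q_x,Q_{x_1},Q_{x_2},\dots$ commute pairwise and generate a commutative von Neumann algebra $\A$; since $H$ is separable, $\A$ admits a separating vector $\psi$, and $\mu(S):=\|Q_S\psi\|^2$ realizes $\A\cong L_\infty(X,\mu)$, projections in $\A$ corresponding bijectively (modulo $\mu$-null sets) to measurable $S_y\subseteq X$. Two matchings hold. First, finite meets become intersections: for commuting $y,z$ one has $Q_{y\wedge z}=Q_yQ_z$ by \eqref{1d37}, so $S_{y\wedge z}=S_y\cap S_z$. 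Second, monotone limits become monotone set limits: if $y_n\downarrow y$ or $y_n\uparrow y$ then $Q_{y_n}\to Q_y$ strongly by Lemma \ref{1d2}(a), and since $\mu(S_{y_n}\triangle S_y)=\|(Q_{y_n}-Q_y)\psi\|^2\to0$ with $S_{y_n}$ monotone, one gets $S_y=\bigcap_n S_{y_n}$, respectively $S_y=\bigcup_n S_{y_n}$. Applying strong convergence to $\psi$ gives $\mu(S_{x_n}\triangle S_x)=\|Q_{x_n}\psi-Q_x\psi\|^2\to0$, i.e. convergence in measure; passing to a subsequence and relabelling, I may assume $\One_{S_{x_n}}\to\One_{S_x}$ $\mu$-almost everywhere, so $\liminf_n S_{x_n}=S_x$ modulo null sets. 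Reading the two matchings backwards, the lattice element $\bigvee_k\bigwedge_{n\ge k}x_n$ has attached set $\bigcup_k\bigcap_{n\ge k}S_{x_n}=\liminf_n S_{x_n}=S_x$, so by faithfulness of $\mu$ it equals $x$. Finally this element lies in $\ti B$: for each $k$ the finite meets $x_k\wedge\dots\wedge x_{k+m}$ belong to $B$ and decrease, as $m\to\infty$, to $\bigwedge_{n\ge k}x_n\in\ti B$; these increase, as $k\to\infty$, to $\bigvee_k\bigwedge_{n\ge k}x_n=x\in\ti B$. Thus $x\in\ti B$, which finishes $\bar B\subseteq\ti B$ and the proof.

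The main obstacle is the correct handling of joins in the representation. A general binary join $y\vee z$ does \emph{not} correspond to $S_y\cup S_z$, because $\F_y\vee\F_z$ is in general strictly larger than the closed linear span of $H_y$ and $H_z$ (it acquires ``cross terms''); what saves the argument is that the suprema entering $\liminf_n x_n$ arise as suprema of \emph{increasing} sequences, hence as genuine strong limits, and those do correspond to unions of sets. The second delicate point is the passage to an almost-everywhere convergent subsequence, needed to turn strong (equivalently, in-measure) convergence into the pointwise identity $\liminf_n S_{x_n}=S_x$; the closure of $B$ under finite meets is exactly what makes each $\bigwedge_{n\ge k}x_n$ a decreasing limit of elements of $B$, and thus places the whole construction inside $\ti B$.
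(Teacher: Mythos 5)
Your proof is correct and takes essentially the same route as the paper's: the easy direction is Lemma \ref{1d2}(b), and the hard direction reproduces the content of Lemma \ref{1d5a} --- commuting projections generating a commutative von Neumann algebra realized as $L_\infty$ of a finite measure space, an almost-everywhere convergent subsequence turning convergence in measure into $\liminf_n S_{x_n}=S_x$, and then the identification of $x$ with $\sup_k \inf_{n\ge k} x_n$ --- followed by exactly the paper's monotone step, where finite meets taken in $B$ decrease to $\inf_{n\ge k}x_n$ and these increase to $x$. The differences are only in packaging: you state the result as the equality of the topological closure of $B$ with the least monotonically closed set containing $B$ (a reformulation the paper itself records right after Definition \ref{1d6}), and you make explicit, via a separating vector and the measure $\mu(S)=\|Q_S\psi\|^2$, the passage from strong operator convergence to convergence in measure that the paper handles by citing Takesaki.
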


The ``only if'' part follows from \ref{1d2}(b). The proof of the
``if'' part is given after a lemma.

Given $ x_n \in \La $, we define
\[
\liminf_n x_n = \sup_n \inf_k x_{n+k} \, , \quad
\limsup_n x_n = \inf_n \sup_k x_{n+k} \, .
\]

\begin{lemma}\label{1d5a}
If $ x_n \in \La $ are pairwise commuting and $ x_n \to x $ then
\[
\liminf_k x_{n_k} = x
\]
for some $ n_1 < n_2 < \dots $
\end{lemma}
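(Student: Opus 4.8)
The plan is to reduce the statement to the classical fact that convergence in measure implies almost-everywhere convergence along a subsequence. First I would pass from the $\La$-elements to the projections $P_n = Q_{x_n}$ and $P = Q_x$. By hypothesis the $x_n$ commute pairwise, and since $x_n \to x$ topologically, $x$ lies in the topological closure of $\{x_n : n\}$; by \eqref{1d33} this closure is commutative, so $P$ commutes with every $P_n$ as well. Thus $\{P\}\cup\{P_n:n\}$ is a countable commuting family of projections, generating an abelian von Neumann algebra $\mathcal M\subset B(H)$. Since $H$ is separable, the spectral theorem (see, e.g., \cite{RS}) represents $\mathcal M$ as multiplication by $L_\infty(X,\mu)$ on a space of square-integrable vector-valued functions over a finite measure space $(X,\mu)$, with each $P_n$ (resp.\ $P$) becoming multiplication by the indicator $\One_{A_n}$ (resp.\ $\One_A$) of a measurable set $A_n\subset X$ (resp.\ $A\subset X$).

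Next I would convert the strong convergence into convergence in measure. Choosing a vector $\Psi$ in the representation space with $\|\Psi(s)\|>0$ for $\mu$-almost every $s$, and setting $\nu(E)=\|\One_E\Psi\|^2$, I obtain a finite measure $\nu$ equivalent to $\mu$ with
\[
\nu(A_n \triangle A) = \| ( P_n - P ) \Psi \|^2 \to 0 \, ,
\]
since $(P_n-P)\Psi$ corresponds to $(\One_{A_n}-\One_A)\Psi$, which is supported exactly on $A_n\triangle A$. Passing to a subsequence $n_k$ with $\nu(A_{n_k}\triangle A)\le 2^{-k}$, the Borel--Cantelli lemma gives $\nu\bigl(\limsup_k(A_{n_k}\triangle A)\bigr)=0$, so $\One_{A_{n_k}}\to\One_A$ pointwise $\mu$-almost everywhere; equivalently $\liminf_k A_{n_k}=A$ modulo $\mu$-null sets.

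Finally I would translate the set-theoretic $\liminf$ back into the lattice $\liminf$, where commutativity is essential. By Section~\ref{1a} the infimum in $\La$ is the intersection of the corresponding type $L_2$ subspaces, and the range of multiplication by $\One_{A_{n_k}}$ is the space of functions supported on $A_{n_k}$; hence $\bigwedge_{k\ge j} x_{n_k}$ corresponds to $\bigcap_{k\ge j} A_{n_k}$. Taking the supremum over $j$ of this increasing family corresponds to $\bigcup_j\bigcap_{k\ge j} A_{n_k}=\liminf_k A_{n_k}$. Therefore $\liminf_k x_{n_k}$ corresponds to $\One_{\liminf_k A_{n_k}}=\One_A$, i.e.\ to $P=Q_x$, and by the bijection between $\La$ and projections this gives $\liminf_k x_{n_k}=x$.

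The main obstacle is the bookkeeping of the representation: I must verify that the abstract lattice operations, meet (as intersection of subspaces) and join, coincide with the set operations $\cap$ and $\cup$ under the multiplication-operator picture --- which rests on commutativity through \eqref{1d37} --- and that the chosen test vector yields a finite measure genuinely equivalent to $\mu$, so that strong convergence really becomes convergence in $\nu$-measure. Once these identifications are secured, the passage to an almost-everywhere convergent subsequence does the rest.
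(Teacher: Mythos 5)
Your proof is correct and takes essentially the same route as the paper's: both represent the commuting projections (together with $Q_x$, which lies in the commutative von Neumann algebra they generate) as indicator functions over a finite measure space, convert strong operator convergence into convergence in measure, extract an almost-everywhere convergent subsequence, and translate the set-theoretic $\liminf$ back into the lattice $\liminf$. The differences are only in implementation: you use the spatial direct-integral form of the spectral theorem and an explicit equivalent measure $\nu$ built from a vector $\Psi$, whereas the paper invokes the abstract isomorphism of \cite[Th.~1.22]{Ta}, transfers countable meets via \eqref{1d37} and \ref{1d2}(a), and treats the convergence-in-measure step more tersely via $\One_{E_n}=Q_{x_n}\One\to Q_x\One=\One_E$ (your handling of that step is, if anything, the more careful one).
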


\begin{proof}
The commuting projection operators $ Q_{x_n} $ generate a commutative
von Neumann algebra; such algebra is always isomorphic to the algebra
$ L_\infty $ on some measure space (of finite measure), see for
instance \cite[Th.~1.22]{Ta}. Denoting the isomorphism by $ \al $ we
have $ \al(Q_{x_n}) = \One_{E_n} $, $ \al(Q_x) = \One_E $ (indicators
of measurable sets $ E_n, E $). By \eqref{1d37},
\[
\al ( Q_{x_m \wedge x_n} ) = \One_{E_m \cap E_n}
\]
for all $ m,n $; the same holds for more than two indices.

The strong convergence of operators $ Q_{x_n} \to Q_x $ implies
convergence in measure of indicators, $ \One_{E_n} \to \One_E $ (since
$ \One_{E_n} = Q_{x_n} \One \to Q_x \One = \One_E $). We choose a
subsequence convergent almost everywhere, $ \One_{E_{n_k}} \to \One_E
$, then $ \liminf_k \One_{E_{n_k}} = \One_E $, that is,
\[
\sup_k \inf_i \One_{E_{n_{k+i}}} = \One_E \, .
\]
We have $ \al ( Q_{x_{n_k} \wedge x_{n_{k+1}} \wedge \dots \wedge
x_{n_{k+i}}} ) = \One_{E_{n_k} \cap E_{n_{k+1}} \cap \dots \cap
E_{n_{k+i}}} $, therefore (for $ i \to \infty $, using \ref{1d2}(a)), $
\al ( Q_{\inf_i x_{n_{k+i}}} ) = \inf_i \One_{E_{n_{k+i}}} $, and
further (for $ k \to \infty $), $ \al ( Q_{ \sup_k \inf_i x_{n_{k+i}}}
) = \sup_k \inf_i \One_{E_{n_{k+i}}} $. We get $ \al ( Q_{\liminf_k
x_{n_k}} ) = \liminf_k \One_{E_{n_k}} = \One_E = \al(Q_x) $, therefore
$ \liminf_k x_{n_k} = x $.
\end{proof}

\begin{proof}[Proof of Proposition \ref{1d4a}, the ``if'' part]
Let $ Z $ be a monotonically closed set, $ Z \supset B $, and $ x \in
C $; we have to prove that $ x \in Z $.

There exist $ x_n \in B $ such that $ x_n \to x $. By \ref{1d5a} we may
assume that $ \liminf_n x_n = x $. We have $ x_n \wedge x_{n+1} \wedge
\dots \wedge x_{n+k} \in B \subset Z $ for all $ k $ and $ n $, which
implies $ \inf_k x_{n+k} \in Z $ and further $ x = \sup_n \inf_k
x_{n+k} \in Z $.
\end{proof}

It will be shown (see \ref{2a2}) that every noise-type Boolean algebra
$ B $ is a commutative subset of $ \La $. Thus, by Proposition
\ref{1d4a}, the following definition is equivalent to that of the
introduction.

\begin{definition}\label{1d6}
The \emph{noise-type completion} of a noise-type Boolean algebra $ B
\subset \La $ is the greatest among all noise-type Boolean algebras $
C \subset \La $ such that $ B \subset C $ and $ B $ is topologically
dense in $ C $ (according to the strong operator topology).
\end{definition}

Also, by \ref{1d4a}, the least monotonically closed subset of $ \La $
containing $ B $ is equal to the topological closure of $ B $.

\begin{corollary}\label{1d7}
In Theorem 2, the set $ \ti B $ may be replaced with the topological
closure of $ B $ (according to the strong operator topology).
\end{corollary}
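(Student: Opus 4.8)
The plan is to reduce the corollary to the single equality $\ti B = \Cl B$ between the least monotonically closed set containing $B$ and the closure of $B$ in the strong operator topology; this is exactly the assertion of the sentence preceding the corollary. Once this equality is established, the two hypotheses (a), (b) of Theorem~2, which both refer only to $\ti B$, are unchanged when $\ti B$ is replaced by $\Cl B$, so nothing further is required.

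First I would check that a noise-type Boolean algebra $B$ satisfies the hypotheses of Proposition~\ref{1d4a}. Being a sublattice of $\La$, it is closed under $\wedge$ (its lattice meet being the meet inherited from $\La$); and by \ref{2a2} it is a commutative subset of $\La$. Thus \ref{1d4a} is applicable to $B$ together with any $C$ with $B \subset C \subset \La$.

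Next I would prove $\ti B = \Cl B$ by two inclusions. For $\ti B \subset \Cl B$: by Lemma~\ref{1d2}(b) the set $\Cl B$ is monotonically closed, and it evidently contains $B$; since $\ti B$ is the \emph{least} monotonically closed superset of $B$, the inclusion follows. For the reverse inclusion $\Cl B \subset \ti B$: apply Proposition~\ref{1d4a} with $C = \Cl B$. Since $B$ is by construction topologically dense in its closure $\Cl B$, the proposition yields that $B$ is monotonically dense in $\Cl B$; by the definition of monotone density, every monotonically closed set containing $B$ contains $\Cl B$, and in particular $\ti B \supset \Cl B$. Combining the two inclusions gives $\ti B = \Cl B$, whence Theorem~2 holds verbatim with $\Cl B$ in place of $\ti B$.

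I do not expect any genuine obstacle here: the entire content is an application of the equivalence in Proposition~\ref{1d4a} to the particular target $C = \Cl B$, together with the fact that a closed set is monotonically closed (Lemma~\ref{1d2}(b)). The only ingredient imported from outside the present section is the commutativity of $B$ from \ref{2a2}, which guarantees that \ref{1d4a} may be invoked; the rest is straightforward bookkeeping with the two defining inclusions.
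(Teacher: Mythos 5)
Your proof is correct and follows essentially the same route as the paper: the paper's justification is exactly the sentence preceding the corollary (``by \ref{1d4a}, the least monotonically closed subset of $\La$ containing $B$ equals the topological closure of $B$''), which rests on Lemma \ref{2a2} for commutativity, Lemma \ref{1d2}(b) for one inclusion, and Proposition \ref{1d4a} applied with $C=\Cl(B)$ for the other. You have merely spelled out the two inclusions that the paper leaves implicit, so nothing is missing.
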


Thus, the ``monotonical'' notions are eliminated.

\smallskip

\textsc{From now on, convergence, denseness and closeness are always
topological (according to the strong operator topology). No other
topology on $ \La $ will be used.}

\smallskip

\begin{proposition}\label{1d4}
Let $ x_n, y_n, x, y \in \La $, $ x_n \to x $, $ y_n \to y $, and for
each $ n $ (separately), $ x_n, y_n $ commute. Then $ x_n \wedge y_n
\to x \wedge y $.
\end{proposition}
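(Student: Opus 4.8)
The plan is to translate the meet $x_n \wedge y_n$ into a product of projections, where strong convergence is easy to control, and then to recover the meet of the limits. The starting point is \eqref{1d37}: since $x_n$ and $y_n$ commute for each fixed $n$, we have $Q_{x_n \wedge y_n} = Q_{x_n} Q_{y_n}$. Thus the whole statement reduces to showing that the products $Q_{x_n} Q_{y_n}$ converge strongly to $Q_{x \wedge y}$.

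First I would record the elementary fact that multiplication is jointly strongly continuous on the unit ball: if $A_n \to A$ and $B_n \to B$ strongly with $\| A_n \| \le 1$, then $A_n B_n \to A B$ strongly, because $A_n B_n \psi - A B \psi = A_n (B_n \psi - B \psi) + (A_n - A) B \psi$ and both terms tend to $0$. Applying this to the projections (all of norm $\le 1$) gives $Q_{x_n} Q_{y_n} \to Q_x Q_y$ strongly; by the same argument $Q_{y_n} Q_{x_n} \to Q_y Q_x$ strongly.

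The point that needs care is that we are \emph{not} given that the limits $x$ and $y$ commute, so a priori $Q_x Q_y$ is neither a projection nor equal to $Q_{x \wedge y}$. I expect this to be the one genuinely nonobvious step, and it is resolved by comparing the two orders of multiplication. For every $n$ the commutation hypothesis gives $Q_{x_n} Q_{y_n} = Q_{y_n} Q_{x_n}$ (Definition \ref{1d3}); passing to the strong limit on both sides of this identity yields $Q_x Q_y = Q_y Q_x$. Hence $x$ and $y$ commute after all, and \eqref{1d37} applies to them as well, giving $Q_x Q_y = Q_{x \wedge y}$.

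Combining the steps, $Q_{x_n \wedge y_n} = Q_{x_n} Q_{y_n} \to Q_x Q_y = Q_{x \wedge y}$ strongly, which is exactly $x_n \wedge y_n \to x \wedge y$. Everything apart from the commutativity deduction is just the joint strong continuity of multiplication together with \eqref{1d37}.
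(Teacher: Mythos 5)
Your proof is correct and follows essentially the same route as the paper's: the same estimate $\| Q_{x_n} Q_{y_n} \psi - Q_x Q_y \psi \| \le \| Q_{x_n} \| \cdot \| (Q_{y_n}-Q_y) \psi \| + \| (Q_{x_n}-Q_x) Q_y \psi \|$ establishing strong convergence of the products, the same passage to the limit in $Q_{x_n} Q_{y_n} = Q_{y_n} Q_{x_n}$ to deduce that $x$ and $y$ commute, and the same double application of \eqref{1d37}. The only difference is expository: you isolate and emphasize the step the paper leaves implicit, namely that commutativity of the limits is a conclusion, not a hypothesis.
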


\begin{proof}
Note that $ Q_{x_n} Q_{y_n} \to Q_x Q_y $, since for each $ \psi \in H
$,
\[
\| Q_{x_n} Q_{y_n} \psi - Q_x Q_y \psi \| \le \| Q_{x_n} \| \cdot \|
(Q_{y_n}-Q_y) \psi \| + \| (Q_{x_n}-Q_x) Q_y \psi \| \to 0 \, .
\]
Similarly, $ Q_{y_n} Q_{x_n} \to Q_y Q_x $. We have $ Q_{x_n} Q_{y_n}
= Q_{y_n} Q_{x_n} $, therefore $ Q_x Q_y = Q_y Q_x $. By \eqref{1d37},
$ Q_{x\wedge y} = Q_x Q_y $. Similarly, $ Q_{x_n \wedge y_n} = Q_{x_n}
Q_{y_n} $. We get $ Q_{x_n \wedge y_n} \to Q_{x\wedge y} $, that is, $
x_n \wedge y_n \to x \wedge y $.
\end{proof}

\begin{definition}\label{1d9}
Elements $ x,y \in \La $ are \emph{independent,} if the
corresponding \sif s $ \F_x $, $ \F_y $ are independent.
\end{definition}

It means, $ P(X\cap Y) = P(X) P(Y) $ for all $ X \in \F_x $, $ Y \in
\F_y $. Or equivalently, $ \ip{ Q_x \xi }{ Q_y \psi } = \ip{ Q_x \xi
}{ \One } \ip{ Q_y \psi }{ \One } $ for all $ \xi, \psi \in H $.

\begin{proposition}\label{1d45}
The following two conditions on $ x,y \in \La $ are equivalent:

(a) $ x,y $ are independent;

(b) $ x,y $ commute, and $ x \wedge y = 0 $.
\end{proposition}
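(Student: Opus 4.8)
The plan is to route everything through a single operator identity: \emph{$x$ and $y$ are independent if and only if $Q_xQ_y=Q_0$}, where $Q_0$ denotes the orthogonal projection onto the one-dimensional space of constants (the projection attached to the least element $0\in\La$, so that $Q_0\psi=\ip{\psi}{\One}\One$). Granting this identity, both implications become short. For (a) $\Rightarrow$ (b): independence gives $Q_xQ_y=Q_0$; taking adjoints and using that $Q_x,Q_y,Q_0$ are all self-adjoint yields $Q_yQ_x=(Q_xQ_y)^{*}=Q_0$ as well, so $Q_xQ_y=Q_yQ_x$ and $x,y$ commute; then \eqref{1d37} gives $Q_{x\wedge y}=Q_xQ_y=Q_0$, i.e. $x\wedge y=0$. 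For (b) $\Rightarrow$ (a): commutativity together with \eqref{1d37} gives $Q_xQ_y=Q_{x\wedge y}$, and $x\wedge y=0$ turns this into $Q_xQ_y=Q_0$, which by the identity is exactly independence.

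It remains to prove the operator identity. I would start from the inner-product form of independence recorded after Definition \ref{1d9}, namely $\ip{Q_x\xi}{Q_y\psi}=\ip{Q_x\xi}{\One}\,\ip{Q_y\psi}{\One}$ for all $\xi,\psi\in H$. That form is itself obtained from the defining relation $P(X\cap Y)=P(X)P(Y)$ by taking $X\in\F_x$, $Y\in\F_y$, reading it as $\ip{\One_X}{\One_Y}=\ip{\One_X}{\One}\,\ip{\One_Y}{\One}$, extending from indicators to all of $H_x,H_y$ by linearity and $L_2$-continuity, and finally writing $f=Q_x\xi$, $g=Q_y\psi$. Now, using $Q_x\One=Q_y\One=\One$ together with self-adjointness, the right-hand side equals $\ip{\xi}{\One}\,\ip{\psi}{\One}$, which one recognizes as $\ip{\xi}{Q_0\psi}$ (over the reals directly; in the complex case the second factor is read as $\ip{\One}{\psi}$, using $\|\One\|=1$). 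The left-hand side is $\ip{\xi}{Q_xQ_y\psi}$. Since $\xi,\psi$ are arbitrary, this forces $Q_xQ_y=Q_0$, and every step is reversible, so the equivalence holds.

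I expect no deep obstacle: the proposition is essentially a repackaging of the classical fact that independence of two sub-\sif s is the relation $\Ex(\Ex(\psi\mid\F_y)\mid\F_x)=\Ex\psi$ between the conditioning projections $Q_x,Q_y$. The points that demand care are purely bookkeeping: the passage from indicators to general $L_2$ elements (density of simple functions), the elementary identity $Q_0\psi=\ip{\psi}{\One}\One$, and, in the complex case, the placement of the complex conjugation on the constant-function factor (invisible over real $L_2$). The genuine content of the statement --- that commutativity together with $x\wedge y=0$ already forces full independence --- is supplied entirely by \eqref{1d37}, so the whole proposition reduces to that lemma plus the reversible computation above.
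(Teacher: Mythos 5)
Your proof is correct and takes essentially the same route as the paper: both reduce independence to the operator identity $Q_xQ_y=Q_0$ (the paper via $\cE{f}{\F_y}=\Ex f$ for $f\in L_2(\F_x)$ in one direction and a direct computation with indicators in the other) and both invoke \eqref{1d37} to pass between $Q_xQ_y$ and $Q_{x\wedge y}$. Your only cosmetic deviations are packaging the identity as a standalone reversible lemma and obtaining $Q_yQ_x=Q_0$ by taking adjoints rather than by the symmetry of independence; also note the complex-conjugation caveat is moot here, since $H=L_2(\Om,\F,P)$ is real in this paper.
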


\begin{proof}
(a) \imp (b): independence of $ \F_x, \F_y $ implies $ \cE{ f }{ \F_y
} = \Ex f $ for all $ f \in L_2(\F_x) $; that is, $ Q_y f = \ip{ f }{
\One } \One $ for $ f \in H_x $, and therefore $ Q_y Q_x = Q_0 = Q_x
Q_y $.

(b) \imp (a): by \eqref{1d37}, $ Q_y Q_x = Q_0 = Q_x Q_y $; thus $ Q_y
f = \ip{ f }{ \One } \One $ for $ f \in H_x $, and therefore $ \Pr{ A
\cap B } = \ip{ \One_A }{ \One_B } = \ip{ \One_A }{ Q_y \One_B } = \ip{
Q_y \One_A }{ \One_B } = \ip{ \One_A }{ \One } \ip{ \One }{ \One_B }
= \Pr A \Pr B $ for all $ A \in \F_x $, $ B \in \F_y $.
\end{proof}

It follows that all pairs $ (x,y) \in \La \times \La $ such that $ x,y
$ are independent are a closed set in $ \La \times \La $ (in the
product topology).

It may happen that $ x \wedge y = 0 $ but $ x,y $ do not
commute.\footnote{%
 For example, $ v $ and $ w $ of \ref{1b2} are independent if and only
 if $ P(\al) P(\de) = P(\be) P(\ga) $; $ u $ and $ w $ are never
 independent.}

For every $ x \in \La $ the triple $ (\Om,\F_x,P|_{\F_x}) $ is also a
probability space, and it may be used similarly to $ (\Om,\F,P) $,
giving the complete lattice $ \La(\F_x) = \La(\Om,\F_x,P|_{\F_x}) $
endowed with the topology, etc. The evident lattice isomorphism
\[
\La_x = \{ y \in \La : y \le x \} \cong \La(\F_x)
\]
is also a homeomorphism. Proof: if $ y \le x $ then $ H_y \subset H_x
\subset H $ and therefore $ Q_y = Q_y^{(x)} Q_x $ where $ Q_y^{(x)} :
H_x \to H_x $ is the orthogonal projection onto $ H_y $. It follows
that $ Q_{y_n} \to Q_y $ if and only if $ Q_{y_n}^{(x)} \to Q_y^{(x)}
$. That is, $ y_n \to y $ in $ \La_x $ if and only if $ y_n \to y $ in
$ \La(\Om,\F_x,P) $.

Given $ x,y \in \La $, the product set $ \La_x \times \La_y $
carries the product topology and the product partial order, and is
again a lattice (see \cite[Sect.~2.15]{DP} for the product of two
lattices), moreover, a complete lattice (see \cite[Exercise
2.26(ii)]{DP}).

\begin{proposition}\label{1d5}
If $ x,y \in \La $ are independent then the map
\[
\La_x \times \La_y \ni (u,v) \mapsto u \vee v \in \La_{x\vee y}
\]
is an embedding, both algebraically and topologically. In other words,
this map is both a lattice isomorphism and a homeomorphism between $
\La_x \times \La_y $ and its image $ \La_{x,y} = \{ u \vee v : u \in
\La_x, v \in \La_y \} $ treated as a sublattice and a topological
subspace of $ \La_{x\vee y} $.
\end{proposition}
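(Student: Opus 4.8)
The plan is to transport everything to the tensor product picture of Subsection~\ref{1c}. For $u\in\La_x$ and $v\in\La_y$ the sub-\sif s $\F_u\subset\F_x$ and $\F_v\subset\F_y$ are independent, being coarser than the independent $\F_x,\F_y$; hence by \ref{1c} the unitary $U\colon H_x\otimes H_y\to H_{x\vee y}$, $f\otimes g\mapsto fg$, carries $H_u\otimes H_v$ onto $H_{u\vee v}$. Under $U$ the subspace $H_x$ corresponds to $H_x\otimes\C\One$ and $H_y$ to $\C\One\otimes H_y$, since an \measurable{\F_x} function $f$ equals $f(\om)\cdot\One$. Every identity below is just \eqref{1c2} read through these decompositions.

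First I would record the recovery formulas $u=(u\vee v)\wedge x$ and $v=(u\vee v)\wedge y$. Indeed, by \eqref{1c2}, $H_{u\vee v}\cap H_x=(H_u\otimes H_v)\cap(H_x\otimes\C\One)=(H_u\cap H_x)\otimes(H_v\cap\C\One)=H_u\otimes\C\One$, using $H_u\subset H_x$ and $\C\One\subset H_v$; so $(u\vee v)\wedge x=u$, and symmetrically for $v$. This already makes the map injective and an order embedding. Preservation of joins is automatic from commutativity and associativity of $\vee$. For meets, \eqref{1c2} gives $H_{u_1\vee v_1}\cap H_{u_2\vee v_2}=(H_{u_1}\cap H_{u_2})\otimes(H_{v_1}\cap H_{v_2})=H_{(u_1\wedge u_2)\vee(v_1\wedge v_2)}$, i.e.\ $(u_1\vee v_1)\wedge(u_2\vee v_2)=(u_1\wedge u_2)\vee(v_1\wedge v_2)$. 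Hence the image $\La_{x,y}$ is closed under both operations and the map is a lattice isomorphism onto it.

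For the topology, note that $Q_{u\vee v}=U\,(Q^{(x)}_u\otimes Q^{(y)}_v)\,U^{*}$, where $Q^{(x)}_u\colon H_x\to H_x$ and $Q^{(y)}_v\colon H_y\to H_y$ are the projections of the homeomorphism $\La_x\cong\La(\F_x)$ recalled before the statement. If $u_n\to u$ in $\La_x$ and $v_n\to v$ in $\La_y$, then $Q^{(x)}_{u_n}\to Q^{(x)}_u$ and $Q^{(y)}_{v_n}\to Q^{(y)}_v$ strongly; since these operators have norm $\le1$ and converge on the dense set of finite tensors, $Q^{(x)}_{u_n}\otimes Q^{(y)}_{v_n}\to Q^{(x)}_u\otimes Q^{(y)}_v$ strongly, whence $Q_{u_n\vee v_n}\to Q_{u\vee v}$. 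Thus the map is continuous.

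For the converse I would invoke Proposition \ref{1d4}. A direct tensor computation shows every $w=u\vee v\in\La_{x,y}$ commutes with $x$: reading through $U$, both $Q_wQ_x$ and $Q_xQ_w$ equal $Q^{(x)}_u\otimes Q^{(y)}_0$, since $Q_x$ acts as $\One\otimes Q^{(y)}_0$ (projection onto the constants of the second factor) and $Q^{(y)}_0$ is absorbed by $Q^{(y)}_v$ because $\C\One\subset H_v$. Hence, given $w_n=u_n\vee v_n\to w=u\vee v$, the pairs $(w_n,x)$ commute and Proposition \ref{1d4} yields $u_n=w_n\wedge x\to w\wedge x=u$; symmetrically $v_n\to v$. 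So the inverse map is continuous and the embedding is a homeomorphism. I expect the only real care to be demanded by the bookkeeping of \ref{1c}---keeping the constant function in the correct tensor factor and lifting strong convergence from finite tensors to all of $H_x\otimes H_y$---while everything substantive is carried by the intersection identity \eqref{1c2}.
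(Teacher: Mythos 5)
Your proposal is correct and takes essentially the same route as the paper's own proof: the identification $H_{x\vee y}\cong H_x\otimes H_y$ with $Q_{u\vee v}=Q_u\otimes Q_v$, the lattice identities via \eqref{1c2}, injectivity via the recovery formulas $u=(u\vee v)\wedge x$, $v=(u\vee v)\wedge y$, and inverse continuity by checking that each $u_n\vee v_n$ commutes with $x$ and invoking Proposition \ref{1d4}. The only differences are expository (you make the unitary $U$ and the norm-bound argument for strong convergence of tensor products explicit, which the paper leaves implicit).
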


\begin{remark}
If $ x \wedge y = 0 $ but $ x,y $ are not independent then the map
need not be one-to-one. For example, \ref{1b2} gives us $ u,v,w $ such
that $ u \wedge v = 0 $, $ w<u $ and $ w \vee v = 1 $. Thus, the map $
\La_u \times \La_v \to \La $ sends to $ 1 $ both $ (u,v) $ and $ (w,v)
$.
\end{remark}

\begin{proof}[Proof of Proposition \ref{1d5}]
According to Sect.~\ref{1c}, independence of $ x,y $ implies
\[
H_{x\vee y} = H_x \otimes H_y \, , \quad (f\otimes g)(\cdot) =
f(\cdot) g(\cdot) \quad \text{for } f \in H_x, \, g \in H_y \, .
\]
We may treat $ \La_x $ as consisting of all $ L_2 $-type subspaces $
H_u \subset H_x $, or the corresponding projections $ Q_u : H_x \to
H_x $. The same holds for $ \La_y $ and $ \La_{x\vee y}
$.

Treating $ H_x \otimes H_y $ as $ H_{x\vee y} $ we get $ H_u \otimes
H_v = H_{u\vee v} \subset H_{x\vee y} $, thus, $ Q_u \otimes Q_v =
Q_{u\vee v} $ for $ u \in \La_x $, $ v \in \La_y $. If $ u_n \to u $
and $ v_n \to v $ then $ u_n \vee v_n \to u \vee v $, since $ Q_{u_n
\vee v_n} = Q_{u_n} \otimes Q_{v_n} \to Q_u \otimes Q_v $. It means
that the map $ J : \La_x \times \La_y \to \La_{x \vee y} $, $
J(u,v)=u\vee v $, is continuous. This map preserves lattice
operations, that is,
\begin{equation}\label{1d11}
\begin{gathered}
( u_1 \vee v_1 ) \vee ( u_2 \vee v_2 ) = ( u_1 \vee u_2 ) \vee ( v_1
  \vee v_2 ) \, , \\
( u_1 \vee v_1 ) \wedge ( u_2 \vee v_2 ) = ( u_1 \wedge u_2 ) \vee (
  v_1 \wedge v_2 )
\end{gathered}
\end{equation}
for all $ u_1, u_2 \in \La_x $, $ v_1, v_2 \in \La_y $. The former
equality is trivial; the latter equality follows from \eqref{1c2}
applied to $ H_1 = H_x, H_2 = H_y, H_{1\mathrm a} = H_{u_1},
H_{2\mathrm a} = H_{v_1}, H_{1\mathrm b} = H_{u_2}, H_{2\mathrm b} =
H_{v_2} $. We have to prove that $ J $ is one-to-one and the
inverse map is continuous.

By \eqref{1d11}, $ ( u \vee v ) \wedge x = u $ and $ ( u \vee v )
\wedge y = v $ for all $ u \in \La_x $, $ v \in \La_y $. Thus $ J $ is
one-to-one. It remains to prove that the maps $ z \mapsto z \wedge x
$, $ z \mapsto z \wedge y $ are continuous on $ J(\La_x \times \La_y)
$. Let $ z_n, z \in J(\La_x \times \La_y) $, $ z_n \to z $. We
introduce $ u_n = z_n \wedge x $, $ u = z \wedge x $, $ v_n = z_n
\wedge y $, $ v = z \wedge y $, then $ u_n, u \in \La_x $, $ v_n, v
\in \La_y $, $ u_n \vee v_n = z_n $ and $ u \vee v = z $. We note that
$ ( Q_{u_n} \otimes Q_{v_n} ) ( Q_x \otimes Q_0 ) = Q_{u_n} \otimes
Q_0 = ( Q_x \otimes Q_0 ) ( Q_{u_n} \otimes Q_{v_n} ) $, that is, $
z_n, x $ commute (for each $ n $ separately). By \ref{1d4}, $ z_n
\wedge x \to z \wedge x $, that is, $ u_n \to u $. Similarly, $ v_n
\to v $.
\end{proof}

\begin{remark}\label{1d14}
We see that $ \La_x \times \La_y $ (for independent $ x,y \in \La $)
is naturally isomorphic to the sublattice
\[
\La_{x,y} = \{ u \vee v : u \in \La_x, v \in \La_y \} = \{ u
\vee v : u \le x, v \le y \} \subset \La \, .
\]
The correspondence between a pair $ (u,v) \in \La_x \times \La_y $ and
$ z \in \La_{x,y} $ is given by
\begin{gather*}
z = u \vee v \, , \\
u = z \wedge x \, , \quad v = z \wedge y \, .
\end{gather*}
Therefore
\begin{equation}\label{1d12}
\La_{x,y} = \{ z \in \La : z = ( z \wedge x ) \vee ( z \wedge y ) \}
\, .
\end{equation}
The continuous map
\[
\La_{x,y} \ni z \mapsto z \wedge x \in \La_x
\]
is a lattice homomorphism (and the same holds for the similar map with
$ y $ in place of $ x $):
\begin{equation}\label{1d16}
\forall z_1,z_2 \in \La_{x,y} \quad ( z_1 \vee z_2 ) \wedge x = ( z_1
\wedge x ) \vee ( z_2 \wedge x )
\end{equation}
and of course, $ ( z_1 \wedge z_2 ) \wedge x = ( z_1 \wedge x )
\wedge ( z_2 \wedge x ) $. Thus, any relation between elements of $
\La_{x,y} $ expressed in terms of lattice operations is equivalent to
the conjunction of two similar relations ``restricted'' to $ x $ and $
y $. For example, the relation
\[
( z_1 \vee z_2 ) \wedge z_3 = z_4 \vee z_5
\]
between $ z_1, z_2, z_3, z_4, z_5 \in \La_{x,y} $ splits in two:
\[
( ( z_1 \wedge x ) \vee ( z_2 \wedge x ) ) \wedge ( z_3 \wedge x ) = (
z_4 \wedge x ) \vee ( z_5 \wedge x )
\]
and a similar relation with $ y $ in place of $ x $.
\end{remark}

\section[Noise-type Boolean algebras]{\raggedright Noise-type Boolean
 algebras}

\label{sec:2}
\subsection{Distributivity relations}
\label{2a}

Throughout this section $ B \subset \La $ is a noise-type Boolean
algebra, as defined below.

\begin{definition}
A noise-type Boolean algebra is a sublattice $ B $ of $ \La $,
containing $ 0 $ and $ 1 $, such that $ B $ is a Boolean algebra and
all $ x,y \in B $ satisfying $ x \wedge y = 0 $ are independent.
\end{definition}

Every $ x \in B $ has its complement $ x' \in B $;
\[
x \wedge x' = 0 \, , \quad x \vee x' = 1 \, .
\]
(The complement in $ B $ is unique, however, many other complements
may exist in $ \La $.)

\begin{lemma}\label{2a2}\footnote{%
 Well-known long ago (in slightly different form).}
$ Q_x Q_y = Q_{x\wedge y} $ for all $ x,y \in B $.
\end{lemma}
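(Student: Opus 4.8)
The plan is to reduce everything to the single identity \eqref{1d37}, which says that \emph{commuting} elements $x,y$ already satisfy $Q_xQ_y=Q_{x\wedge y}$; the computation I set up below in fact produces $Q_xQ_y$ explicitly and so yields both the commutation and the value at once. The whole idea is to split $H$ along $x$ and its $B$-complement and to read off where $y$ lands in that splitting. So first I would take the complement $x'\in B$ of $x$, with $x\wedge x'=0$ and $x\vee x'=1$. Since $x,x'\in B$ and $x\wedge x'=0$, the defining property of a noise-type Boolean algebra makes $x,x'$ independent. Hence Proposition \ref{1d5} applies to the pair $(x,x')$: because $x\vee x'=1$, the map $(u,v)\mapsto u\vee v$ is a lattice isomorphism and a homeomorphism of $\La_x\times\La_{x'}$ onto $\La_{x,x'}\subset\La_{x\vee x'}=\La$, realized on Hilbert spaces by $H=H_x\otimes H_{x'}$ with $Q_{u\vee v}=Q_u\otimes Q_v$ for $u\le x$, $v\le x'$ (the two factors acting on $H_x$ and $H_{x'}$ respectively).

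Next I would locate both $x$ and $y$ inside the image $\La_{x,x'}$ of this embedding. For $x$ this is immediate: $x=x\vee 0$ corresponds to $(x,0)$, so $Q_x=\One\otimes Q_0$, the identity on $H_x$ tensored with the projection onto constants in $H_{x'}$. For $y$ this is exactly the place where the Boolean structure of $B$ is needed: since $B$ is a Boolean, hence distributive, lattice, $y=y\wedge 1=y\wedge(x\vee x')=(y\wedge x)\vee(y\wedge x')$, so by \eqref{1d12} indeed $y\in\La_{x,x'}$, and it corresponds to $(y\wedge x,\,y\wedge x')$ with $Q_y=Q_{y\wedge x}\otimes Q_{y\wedge x'}$.

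It then remains to multiply the two tensor expressions. I would compute $Q_xQ_y=Q_{y\wedge x}\otimes(Q_0\,Q_{y\wedge x'})$, and observe that the constants lie in every type $L_2$ space, so since $0\le y\wedge x'$ we have $Q_0\,Q_{y\wedge x'}=Q_0=Q_{y\wedge x'}\,Q_0$. Therefore $Q_xQ_y=Q_{y\wedge x}\otimes Q_0=Q_{(y\wedge x)\vee 0}=Q_{x\wedge y}$, and the same computation with the factors in the opposite order gives $Q_yQ_x=Q_{x\wedge y}$ as well, so $x,y$ commute. (One could equally stop at the commutation and invoke \eqref{1d37}, but the direct evaluation is no longer.)

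The only genuine obstacle is the second step: $y$ is an arbitrary element of $B$, a priori unrelated to the tensor splitting forced by $x$, and the argument collapses unless $y$ actually sits in the image $\La_{x,x'}$ of the embedding. That membership is precisely the distributive identity $y=(y\wedge x)\vee(y\wedge x')$, which is available only because $B$ is Boolean — this is the single point at which the hypothesis is really consumed. Everything after it is routine manipulation of commuting projections on a tensor product together with the trivial remark that constants are contained in every type $L_2$ space.
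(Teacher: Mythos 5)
Your proof is correct, and it takes a genuinely different route from the paper's. The paper argues by direct computation on a dense set of vectors: it uses the four-fold splitting determined by both $x$ and $y$, takes product vectors $\psi=\psi_{00}\psi_{01}\psi_{10}\psi_{11}$ with factors in $H_{x'\wedge y'}$, $H_{x'\wedge y}$, $H_{x\wedge y'}$, $H_{x\wedge y}$, computes $Q_y\psi$, then $Q_xQ_y\psi$, then $Q_{x\wedge y}\psi$ explicitly, and concludes since linear combinations of such $\psi$ are dense in $H$. You instead split only along the pair $(x,x')$: independence of $x,x'$ lets you invoke Proposition \ref{1d5} to identify $\La_x\times\La_{x'}$ with $\La_{x,x'}$, with $Q_{u\vee v}=Q_u\otimes Q_v$ on $H=H_x\otimes H_{x'}$; distributivity of $B$ gives $y=(y\wedge x)\vee(y\wedge x')$, hence $Q_y=Q_{y\wedge x}\otimes Q_{y\wedge x'}$, while $Q_x=\One\otimes Q_0$, and the identity $Q_xQ_y=Q_yQ_x=Q_{x\wedge y}$ falls out by multiplying tensor factors. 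Both proofs consume exactly the same two hypotheses --- independence of a complementary pair, and Booleanness (distributivity) of $B$ --- but your route delegates all density and multiplication bookkeeping to the already-proven Proposition \ref{1d5}, needs only the single independent pair $(x,x')$ rather than the iterated four-fold decomposition, and isolates cleanly the one place where distributivity is used; the paper's computation is more self-contained (it relies only on the tensor facts of Sect.~\ref{1c}, not on Proposition \ref{1d5}) and, being symmetric in $x$ and $y$, exhibits the commutation at a glance. There is no circularity in your version: Proposition \ref{1d5} and Remark \ref{1d14} (in particular \eqref{1d12}) are established in Sect.~\ref{1d}, independently of Lemma \ref{2a2}.
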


\begin{proof}
For every $ \psi \in H $ of the form $ \psi = \psi_{00} \psi_{01}
\psi_{10} \psi_{11} $ where $ \psi_{11} \in H_{x\wedge y} $, $
\psi_{10} \in H_{x\wedge y'} $, $ \psi_{01} \in H_{x'\wedge y} $, $
\psi_{00} \in H_{x'\wedge y'} $, we have
\begin{gather*}
Q_y \psi = Q_y ( (\psi_{00}\psi_{10}) (\psi_{01}\psi_{11}) ) =
 \ip{ \psi_{00}\psi_{10} }{ \One } \psi_{01}\psi_{11} =
 \ip{ \psi_{00} }{ \One } \ip{ \psi_{10} }{ \One } \psi_{01}\psi_{11}
 \, ; \\
Q_x Q_y \psi = \ip{ \psi_{00} }{ \One } \ip{ \psi_{10} }{ \One } \ip{
 \psi_{01} }{ \One } \psi_{11} \, ; \\
Q_{x\wedge y} \psi = Q_{x\wedge y} ( ( \psi_{00}\psi_{01}\psi_{10} )
 \psi_{11} ) = \ip{ \psi_{00}\psi_{01}\psi_{10} }{ \One } \psi_{11} =
 Q_x Q_y \psi \, .
\end{gather*}
It follows that $ Q_x Q_y = Q_{x\wedge y} $, since linear combinations
of the considered $ \psi $ are dense in $ H $.
\end{proof}

We denote by $ \Cl(B) $ the closure\footnote{%
 Topological, of course; recall the note after \ref{1d7}.}
of $ B $;
\begin{equation}\label{2a3}
\Cl(B) \text{ is commutative,}\footnotemark
\end{equation}
\footnotetext{As defined by \ref{1d3}.}
and for all $ x,y \in \Cl(B) $,
\begin{gather}
x \wedge y \in \Cl(B) \, , \label{2a4a} \\
x \wedge y = 0 \text{ if and only if } x,y \text{ are
 independent.}\footnotemark \label{2a5a}
\end{gather}
\footnotetext{As defined by \ref{1d9}.}
\emph{Proof:} by \eqref{2a2}, $ B $ is commutative, which implies
\eqref{2a3} by \eqref{1d33}; \eqref{2a4a} follows from \ref{1d4} and
\eqref{2a3}; \eqref{2a5a} follows from \ref{1d45} and \eqref{2a3}.

By \ref{1d4} and \eqref{2a3},
\begin{equation}\label{2a6a}
x_n \wedge y_n \to x \wedge y
\end{equation}
whenever $ x_n,x,y_n,y \in \Cl(B) $, $ x_n \to x $, $ y_n \to y $.

\begin{remark}\label{2a65}
Surprisingly, $ x_n \vee y_n $ need not converge to $ x \vee y $, even
if $ x_n \in B $, $ x_n \downarrow 0 $, $ y_n = x'_n $; it may happen
that $ y_n \uparrow y $, $ y \ne 1 $. ``The phenomenon \dots tripped
up even Kolmogorov and Wiener'' \cite[p.~48]{Wi}. Also, it may happen
that $ x_n \in B $, $ x_n \to 0 $, $ y_n = x'_n $, and the projectors
$ Q_{y_n} $ converge weakly (that is, in the weak operator topology)
to an operator that is not a projection, and therefore no subsequence
of $ (x'_n)_n $ converges in $ \La $.

On the other hand it can be shown that if $ x_n \in B $, $ x_n \to 1
$, then necessarily $ x'_n \to 0 $.
\end{remark}

\begin{lemma}
\[
\forall x \in \Cl(B) \;\; \forall y,z \in B \quad x \wedge ( y \vee z
) = ( x \wedge y ) \vee ( x \wedge z ) \, .
\]
\end{lemma}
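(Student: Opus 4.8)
The plan is to approximate the closure element $x$ by elements of $B$ and to exploit that $B$, being a Boolean algebra, is already distributive. Choose $x_n\in B$ with $x_n\to x$. For each $n$ the ordinary distributive law in $B$ gives $x_n\wedge(y\vee z)=(x_n\wedge y)\vee(x_n\wedge z)$, and it remains to pass to the limit on both sides. The left-hand side is unproblematic: by \eqref{2a6a} the meet is continuous on $\Cl(B)$, so $x_n\wedge(y\vee z)\to x\wedge(y\vee z)$. The right-hand side is the delicate point, since the join is not continuous in general (Remark~\ref{2a65}); this is the main obstacle, and the whole argument is arranged so as to invoke the join only where Proposition~\ref{1d5} guarantees its continuity.

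I would first treat the disjoint case $y\wedge z=0$. Then $y$ and $z$ are independent by the noise-type property, so Proposition~\ref{1d5} applies: the map $(u,v)\mapsto u\vee v$ is a homeomorphism of $\La_y\times\La_z$ onto $\La_{y,z}\subset\La_{y\vee z}$, and in particular the join is continuous on pairs $(u,v)$ with $u\le y$, $v\le z$. Now $x_n\wedge y\le y$ and $x_n\wedge z\le z$, while by \eqref{2a6a} we have $x_n\wedge y\to x\wedge y$ and $x_n\wedge z\to x\wedge z$ (convergence in $\La$ of elements below $y$, resp. below $z$, is the same as convergence in $\La_y$, resp. $\La_z$). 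Hence $(x_n\wedge y)\vee(x_n\wedge z)\to(x\wedge y)\vee(x\wedge z)$. Comparing the two limits of the identity above (the topology is metrizable, hence Hausdorff) yields $x\wedge(y\vee z)=(x\wedge y)\vee(x\wedge z)$ in the disjoint case.

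For arbitrary $y,z\in B$ I would reduce to the disjoint case by Boolean bookkeeping. Put $w=z\wedge y'\in B$, so that $y\wedge w=0$ and $y\vee w=y\vee z$. The disjoint case applied to $y,w$ gives $x\wedge(y\vee z)=(x\wedge y)\vee(x\wedge w)$. Since $w\le z$, clearly $(x\wedge y)\vee(x\wedge w)\le(x\wedge y)\vee(x\wedge z)$. For the reverse inequality, note that $(z\wedge y)\wedge w=0$ and $(z\wedge y)\vee w=z$, so the disjoint case applied to $z\wedge y$ and $w$ gives $x\wedge z=(x\wedge z\wedge y)\vee(x\wedge w)\le(x\wedge y)\vee(x\wedge w)$. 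Therefore $(x\wedge y)\vee(x\wedge z)\le(x\wedge y)\vee(x\wedge w)$, and the two sides coincide. Combining, $x\wedge(y\vee z)=(x\wedge y)\vee(x\wedge w)=(x\wedge y)\vee(x\wedge z)$, as required.

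The only genuine difficulty is the discontinuity of $\vee$; everything hinges on confining each use of the join to a situation covered by Proposition~\ref{1d5}, which is precisely why the reduction replaces $z$ by the piece $w=z\wedge y'$ disjoint from $y$ rather than attempting to distribute directly.
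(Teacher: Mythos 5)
Your proof is correct, and its core coincides with the paper's: both proofs first settle the disjoint case $y\wedge z=0$ by exactly the same argument (by \eqref{2a5a} the elements $y,z$ are independent, meets are continuous by \eqref{2a6a}, distributivity holds in $B$ along the approximating sequence $x_n\to x$, and Proposition~\ref{1d5} supplies the one place where continuity of $\vee$ is legitimate), and then reduce the general case to the disjoint one by Boolean bookkeeping. The only divergence is in that reduction. The paper splits $y\vee z$ into the three pairwise disjoint pieces $u=y\wedge z'$, $v=y\wedge z$, $w=y'\wedge z$ and applies the disjoint case three times, obtaining the identity purely equationally: $x\wedge y=(x\wedge u)\vee(x\wedge v)$, $x\wedge z=(x\wedge v)\vee(x\wedge w)$, and $x\wedge(y\vee z)=(x\wedge u)\vee(x\wedge v)\vee(x\wedge w)$, whence both sides equal the same three-fold join. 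You instead replace $z$ by the single piece $w=z\wedge y'$ and prove the equality by antisymmetry, using two applications of the disjoint case (to the pairs $y,w$ and $z\wedge y,\,w$) together with monotonicity of $\vee$ in $\La$, which is a valid general-lattice fact. Your route is marginally more economical (two applications instead of three) at the cost of an order-theoretic argument in place of the paper's symmetric, purely computational one; both are sound, and both correctly confine every use of $\vee$ on limits to the situation covered by Proposition~\ref{1d5}.
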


\begin{proof}
First, consider the case $ y \wedge z = 0 $.
By \eqref{2a5a}, $ y, z $ are independent.
We take $ x_n \in B $ such that $ x_n \to x $.
By \eqref{2a6a}, $ x_n \wedge y \to x \wedge y $, $ x_n \wedge z \to x
\wedge z $ and $ x_n \wedge ( y \vee z ) \to x \wedge ( y \vee z ) $.
Applying \ref{1d5} to $ ( x_n \wedge y, x_n \wedge z ) \in \La_y
\times \La_z $ we get $ ( x_n \wedge y ) \vee ( x_n \wedge z ) \to (
x \wedge y ) \vee ( x \wedge z ) $. On the other hand, $ ( x_n \wedge
y ) \vee ( x_n \wedge z ) = x_n \wedge ( y \vee z ) $ (since $ B $ is
distributive). Thus, $ x_n \wedge ( y \vee z ) \to ( x \wedge y ) \vee
( x \wedge z ) $ and so, $ x \wedge ( y \vee z ) = ( x \wedge y ) \vee
( x \wedge z ) $.

Second, if $ y \wedge z \ne 0 $, we introduce $ u = y \wedge z' $, $ v
= y \wedge z $, $ w = y' \wedge z $, note that $ u \vee v \vee w = y
\vee z $, $ u \wedge v = 0 $, $ u \wedge w = 0 $, $ v \wedge w = 0 $,
and apply several times the special case proved above:
\begin{gather*}
x \wedge y = x \wedge ( u \vee v ) = ( x \wedge u ) \vee ( x \wedge v
 ) \, ; \\
x \wedge z = x \wedge ( v \vee w ) = ( x \wedge v ) \vee ( x \wedge w
 ) \, ;
\end{gather*}
\vspace{-9mm}
\begin{multline*}
x \wedge ( y \vee z ) = x \wedge ( u \vee ( v \vee w ) ) = ( x \wedge
 u ) \vee ( x \wedge ( v \vee w ) ) = \\
= ( x \wedge u ) \vee ( x \wedge v ) \vee ( x \wedge w ) \, .
\end{multline*}
\end{proof}

Taking $ y = z' $ we get $ x = ( x \wedge z ) \vee ( x \wedge z' ) $,
that is (recall \eqref{1d12}),
\[
\forall z \in B \quad \Cl(B) \subset \La_{z,z'} \, .
\]
By \eqref{1d16},
\begin{equation}\label{2a4}
\forall x,y \in \Cl(B) \;\; \forall z \in B \quad ( x \vee y ) \wedge
z = ( x \wedge z ) \vee ( y \wedge z ) \, .
\end{equation}

\begin{proposition}\label{2a5}
Let $ x,y \in \Cl(B) $, $ x \wedge y = 0 $, $ x \vee y = 1 $. Then $
\Cl(B) \subset \La_{x,y} $, that is,
\[
\forall z \in \Cl(B) \quad z = ( x \wedge z ) \vee ( y \wedge z ) \,
.
\]
\end{proposition}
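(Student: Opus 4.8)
The plan is to reformulate the claim as the inclusion $\Cl(B)\subseteq\La_{x,y}$, where by \eqref{1d12} $\La_{x,y}=\{z\in\La:z=(z\wedge x)\vee(z\wedge y)\}$, and then to prove this inclusion by showing (i) $B\subseteq\La_{x,y}$ and (ii) $\La_{x,y}$ is topologically closed; since $\Cl(B)$ is the closure of $B$, (i) and (ii) give $\Cl(B)\subseteq\La_{x,y}$, which is exactly the assertion. To set this up I first note that, by \eqref{2a5a}, $x\wedge y=0$ makes $x,y$ independent; as $x\vee y=1$, Proposition~\ref{1d5} applies and the map $J\colon\La_x\times\La_y\to\La_{x,y}$, $(u,v)\mapsto u\vee v$, is both a lattice isomorphism and a homeomorphism.

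Step (i) is immediate and requires no limiting argument: for $b\in B$ I apply \eqref{2a4} with $x,y\in\Cl(B)$ and with $b$ in the role of the element of $B$, getting $(b\wedge x)\vee(b\wedge y)=(x\vee y)\wedge b=1\wedge b=b$, so $b\in\La_{x,y}$.

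Step (ii) is the heart of the matter. Given $z_k\in\La_{x,y}$ with $z_k\to z$, I set $u_k=z_k\wedge x\le x$ and $v_k=z_k\wedge y\le y$, so that $z_k=u_k\vee v_k$. The crucial observation is that although $z_k$ need not lie in the commutative set $\Cl(B)$, its membership in $\La_{x,y}$ forces, via the tensor representation $Q_{z_k}=Q_{u_k}\otimes Q_{v_k}$ from the proof of Proposition~\ref{1d5} (equivalently via \eqref{1d11}), that $z_k$ commutes with both $x$ and $y$. Passing to the limit in $Q_{z_k}Q_x=Q_xQ_{z_k}$ and in the analogue for $y$, I conclude that $z$ too commutes with $x$ and $y$; hence Proposition~\ref{1d4} gives $u_k=z_k\wedge x\to z\wedge x=:u$ and $v_k\to z\wedge y=:v$, with $u\le x$ and $v\le y$. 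Finally, because $J$ is a homeomorphism, $z_k=J(u_k,v_k)\to J(u,v)=u\vee v$, so $z=u\vee v=(z\wedge x)\vee(z\wedge y)\in\La_{x,y}$, proving that $\La_{x,y}$ is closed.

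I expect the main obstacle to be exactly this closedness, because of the failure of continuity of $\vee$ emphasized in Remark~\ref{2a65}: one cannot simply approximate $x$ and $y$ from $B$ and pass to the limit, since the relevant joins (and complements) need not converge. The device that circumvents this is to confine all joins to the image of the single fixed map $J$, where Proposition~\ref{1d5} guarantees continuity, while meets remain harmless thanks to Proposition~\ref{1d4}. Once $\La_{x,y}$ is known to be closed, the trivial inclusion $B\subseteq\La_{x,y}$ from \eqref{2a4} closes the argument.
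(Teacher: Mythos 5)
Your proof is correct, but it is organized around a stronger intermediate claim than the paper's own argument. The paper proceeds directly: given $ z \in \Cl(B) $, it picks $ z_n \in B $ with $ z_n \to z $, writes $ z_n = ( x \wedge z_n ) \vee ( y \wedge z_n ) $ by \eqref{2a4}, and passes to the limit, using \eqref{2a6a} for the meets and Proposition \ref{1d5} for the join; crucially, \eqref{2a6a} justifies $ x \wedge z_n \to x \wedge z $ by the commutativity of $ \Cl(B) $, i.e.\ by the fact that all elements involved lie in $ \Cl(B) $. You instead prove that $ \La_{x,y} $ itself is topologically closed --- a statement not formulated in the paper and making no reference to $ B $ --- and combine it with the inclusion $ B \subset \La_{x,y} $ (the same use of \eqref{2a4}). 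Accordingly, your justification of the meet-convergence $ z_k \wedge x \to z \wedge x $ is different: rather than commutativity of $ \Cl(B) $, you use the observation (made inside the paper's proof of Proposition \ref{1d5}) that every $ z_k \in \La_{x,y} $ commutes with $ x $ because $ Q_{z_k} = Q_{u_k} \otimes Q_{v_k} $, note that commutation survives strong limits, and then invoke Proposition \ref{1d4}; this is sound, since \ref{1d4} only requires the termwise commutation. Both routes hinge on the same two continuity facts (\ref{1d4} for meets, \ref{1d5} for joins confined to $ \La_x \times \La_y $), and both correctly sidestep the discontinuity of $ \vee $ noted in Remark \ref{2a65}. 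What your version buys is a more general lemma --- $ \La_{x,y} $ is closed whenever $ x,y $ are independent and $ x \vee y = 1 $ --- at the cost of re-entering the tensor-product mechanics of \ref{1d5}; the paper's version is more economical, never leaving the commutative set $ \Cl(B) $.
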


\begin{proof}
We take $ z_n \in B $ such that $ z_n
\to z $ and apply \eqref{2a4}:
\[
\forall n \quad z_n = ( x \wedge z_n ) \vee ( y \wedge z_n ) \, .
\]
By \eqref{2a5a}, $ x $ and $ y $ are independent. We have $ z_n = u_n
\vee v_n \in \La_{x,y} $ where
\[
u_n = x \wedge z_n \in \La_x \, , \quad v_n = y \wedge z_n \in \La_y
\, .
\]
By \eqref{2a6a}, $ u_n \to u = x \wedge z $ and $ v_n \to v = y \wedge
z $. By \ref{1d5}, $ u_n \vee v_n \to u \vee v $. On the
other hand, $ u_n \vee v_n = z_n \to z $. Thus, $ z = u \vee v = ( x
\wedge z ) \vee ( y \wedge z ) $.
\end{proof}

\begin{lemma}\label{2a6}
For every $ x \in \Cl(B) $ there exists at most one $ y \in \Cl(B) $
such that $ x \wedge y = 0 $ and $ x \vee y = 1 $.
\end{lemma}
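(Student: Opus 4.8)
The plan is to reduce the uniqueness claim to Proposition \ref{2a5}, which already furnishes, for any complementary pair in $ \Cl(B) $, a decomposition of \emph{every} element of $ \Cl(B) $ along that pair. Suppose $ y_1, y_2 \in \Cl(B) $ both complement $ x $, that is, $ x \wedge y_1 = 0 = x \wedge y_2 $ and $ x \vee y_1 = 1 = x \vee y_2 $. I want to conclude $ y_1 = y_2 $, and the natural strategy is to prove the two inequalities $ y_2 \le y_1 $ and $ y_1 \le y_2 $ separately, each by applying \ref{2a5} to one of the two complementary pairs.

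First I would apply Proposition \ref{2a5} to the pair $ (x, y_1) $ (legitimate since $ x, y_1 \in \Cl(B) $, $ x \wedge y_1 = 0 $, $ x \vee y_1 = 1 $), taking the element $ z := y_2 \in \Cl(B) $. This yields
\[
y_2 = ( x \wedge y_2 ) \vee ( y_1 \wedge y_2 ) \, .
\]
Since $ x \wedge y_2 = 0 $ by hypothesis, the right-hand side collapses to $ y_1 \wedge y_2 $, so $ y_2 = y_1 \wedge y_2 $, that is, $ y_2 \le y_1 $. By the symmetric application of \ref{2a5} to the pair $ (x, y_2) $ with $ z := y_1 $, and using $ x \wedge y_1 = 0 $, I get $ y_1 = y_2 \wedge y_1 $, hence $ y_1 \le y_2 $. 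Antisymmetry of the partial order then gives $ y_1 = y_2 $.

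I do not anticipate a genuine obstacle here: once Proposition \ref{2a5} is in hand, the argument is a two-line symmetric computation, and the only point requiring care is checking that the hypotheses of \ref{2a5} are met for both pairs $ (x,y_1) $ and $ (x,y_2) $ — which is immediate from the assumption that each $ y_i $ is a complement of $ x $ inside $ \Cl(B) $. It is worth noting that the commutativity and the equivalence ``$ x \wedge y = 0 \Leftrightarrow $ independence'' recorded in \eqref{2a5a} are what make \ref{2a5} applicable in the first place, so all the real work has already been done upstream; the present lemma merely harvests it.
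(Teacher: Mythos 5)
Your proof is correct and is essentially identical to the paper's: the paper likewise applies Proposition \ref{2a5} to the complementary pair $(x,y_1)$ with $z = y_2$ to get $y_2 = (x\wedge y_2)\vee(y_1\wedge y_2) = y_1\wedge y_2$, and then argues symmetrically to conclude $y_1 = y_2\wedge y_1$. The only difference is cosmetic --- you spell out the antisymmetry step and the hypothesis-checking that the paper leaves implicit.
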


\begin{proof}
Assume that $ y_1,y_2 \in \Cl(B) $, $ x \wedge y_k = 0 $ and $ x \vee
y_k = 1 $ for $ k=1,2 $. By \ref{2a5}, $ y_2 \in \La_{x,y_1} $, that
is, $ y_2 = ( x \wedge y_2 ) \vee ( y_1 \wedge y_2 ) = y_1 \wedge y_2
$. Similarly, $ y_1 = y_2 \wedge y_1 $.
\end{proof}

\subsection{Noise-type completion}
\label{2b}

Still, $ B \subset \La $ is a noise-type Boolean algebra. We define
\begin{equation}\label{2b1}
C = \{ x \in \Cl(B) : \exists y \in \Cl(B) \; ( x \wedge y = 0, \, x
\vee y = 1 ) \} \, .
\end{equation}

For $ x \in C $ we denote such $ y $ (unique by \ref{2a6}) by $ x'
$. We have
\begin{equation}\label{2b2}
B \subset C \subset \Cl(B) \, ,
\end{equation}
and for every $ x \in C $,
\begin{gather}
x' \in C \, ; \quad (x')' = x \, ; \label{2b3} \\
x \wedge x' = 0 \, , \quad  x \vee x' = 1 \, . \label{2b4}
\end{gather}
By \eqref{2a5a}, $ x, x' $ are independent; and by \ref{2a5},
\begin{equation}\label{2b5}
\forall x \in C \quad \Cl(B) \subset \La_{x,x'} \, .
\end{equation}

\begin{lemma}\label{2b6}
For every $ x \in C $ the map
\[
\Cl(B) \ni z \mapsto x \vee z \in \La
\]
is continuous.
\end{lemma}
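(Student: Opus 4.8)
The plan is to use the complement $ x' $ of $ x $ to reduce the join $ x \vee z $ to a join over the independent pair $ (x,x') $, where Proposition \ref{1d5} supplies continuity. Fix $ x \in C $ and let $ x' \in \Cl(B) $ be the element with $ x \wedge x' = 0 $ and $ x \vee x' = 1 $ furnished by the definition \eqref{2b1} of $ C $. By \eqref{2a5a} the elements $ x,x' $ are independent, and by \eqref{2b5} we have $ \Cl(B) \subset \La_{x,x'} $, so every $ z \in \Cl(B) $ decomposes as $ z = (z \wedge x) \vee (z \wedge x') $. Since $ z \wedge x \le x $, this yields the key reduction
\[
x \vee z = x \vee (z \wedge x) \vee (z \wedge x') = x \vee (z \wedge x')
\qquad \text{for all } z \in \Cl(B) \, .
\]
Thus it suffices to prove that $ z \mapsto x \vee (z \wedge x') $ is continuous on $ \Cl(B) $; I shall exhibit it as a composition of two continuous maps.

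The topology on $ \La $ is metrizable, so I check sequential continuity. Let $ z_n, z \in \Cl(B) $ with $ z_n \to z $. Because $ x' \in \Cl(B) $, the continuity of meet on $ \Cl(B) $ recorded in \eqref{2a6a} gives $ z_n \wedge x' \to z \wedge x' $; write $ v_n = z_n \wedge x' $ and $ v = z \wedge x' $, so that $ v_n, v \in \La_{x'} $ and $ v_n \to v $ in $ \La $, hence also in the subspace $ \La_{x'} $. This is the first, continuous, map $ z \mapsto z \wedge x' $.

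For the second map, apply Proposition \ref{1d5} to the independent pair $ x,x' $; since $ x \vee x' = 1 $ we have $ \La_{x \vee x'} = \La $, and the proposition says that $ \La_x \times \La_{x'} \ni (u,v) \mapsto u \vee v \in \La $ is a homeomorphism onto its image $ \La_{x,x'} $, in particular continuous. Feeding in the convergent sequence $ (x, v_n) \to (x, v) $ in $ \La_x \times \La_{x'} $ — the first coordinate being constant — we obtain $ x \vee v_n \to x \vee v $, that is, $ x \vee (z_n \wedge x') \to x \vee (z \wedge x') $. By the reduction above this is precisely $ x \vee z_n \to x \vee z $, completing the proof.

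The delicate point is that join is in general \emph{not} continuous (Remark \ref{2a65}), so continuity must not be expected for an arbitrary fixed first argument. What rescues the situation is membership of $ x $ in $ C $: the complement $ x' $ lets me trade the unruly term $ z $ for its component $ z \wedge x' $, on which meet is continuous by \eqref{2a6a}, while the independence of $ x $ and $ x' $ turns the remaining join $ x \vee (\cdot) $ into the tensor-type embedding of Proposition \ref{1d5}, continuous by construction. I expect that the only real obstacle, were one to argue more directly, would be exactly the discontinuity phenomenon of Remark \ref{2a65}; routing everything through $ x' $ and \ref{1d5} is what circumvents it.
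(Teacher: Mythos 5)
Your proof is correct and follows essentially the same route as the paper's: reduce $ x \vee z $ to $ x \vee (z \wedge x') $, obtain $ z_n \wedge x' \to z \wedge x' $ from \eqref{2a6a}, and conclude by applying Proposition \ref{1d5} to the independent pair $ x, x' $ with constant first coordinate. The only (harmless) difference is how the identity $ x \vee z = x \vee (z \wedge x') $ is established: you decompose $ z $ itself via \eqref{1d12} and absorb $ z \wedge x \le x $, whereas the paper computes the $ \La_x $- and $ \La_{x'} $-components of $ x \vee z $ using the lattice homomorphisms of Remark \ref{1d14}; your version is, if anything, slightly more elementary.
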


\begin{proof}
Let $ z_n,z \in \Cl(B) $, $ z_n \to z $; we have to prove that $ x
\vee z_n \to x \vee z $. By \eqref{2a6a}, $ x' \wedge z_n \to x'
\wedge z $. Applying \ref{1d5} to $ ( x, x' \wedge z_n ) \in \La_x
\times \La_{x'} $ we get $ x \vee ( x' \wedge z_n ) \to x \vee ( x'
\wedge z ) $. It remains to prove that $ x \vee ( x' \wedge z_n ) = x
\vee z_n $ and $ x \vee ( x' \wedge z ) = x \vee z $. We prove the
latter; the former is similar. We have $ z \in \Cl(B) \subset
\La_{x,x'} $ by \eqref{2b5}. According to \ref{1d14} we apply the
lattice homomorphisms $ \La_{x,x'} \ni y \mapsto y \wedge x \in \La_x
$ and $ \La_{x,x'} \ni y \mapsto y \wedge x' \in \La_{x'} $ to $ y = x
\vee z $: $ ( x \vee z ) \wedge x = x $ and $ ( x \vee z ) \wedge x' =
( x \wedge x' ) \vee ( z \wedge x' ) = z \wedge x' $, therefore $ x
\vee z = x \vee ( x' \wedge z ) $.
\end{proof}

\begin{lemma}\label{2b7}
\[
\forall x \in C \;\, \forall y \in \Cl(B) \quad x \vee y \in \Cl(B) \,
.
\]
\end{lemma}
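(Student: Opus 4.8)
The plan is to reduce the statement to the continuity already established in Lemma~\ref{2b6}, applied twice, together with two soft facts: $\Cl(B)$ is closed (being a topological closure), and the strong operator topology is metrizable, so sequential arguments are legitimate. Throughout I use that $B \subset C \subset \Cl(B)$ by \eqref{2b2}, so in particular every $x \in C$ can be approximated by members of $B$, and every $b \in B$ itself lies in $C$.

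First I would dispose of the case $y \in B$. Given $x \in C$ and $b \in B$, I choose $x_n \in B$ with $x_n \to x$, which is possible since $x \in \Cl(B)$. Because $b \in B \subset C$, Lemma~\ref{2b6} guarantees that the map $z \mapsto b \vee z$ is continuous on $\Cl(B)$; hence $x_n \vee b = b \vee x_n \to b \vee x = x \vee b$. Each $x_n \vee b$ lies in $B \subset \Cl(B)$ because $B$ is a sublattice, and $\Cl(B)$ is closed, so $x \vee b \in \Cl(B)$.

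Second I would pass from $b \in B$ to an arbitrary $y \in \Cl(B)$. I choose $y_n \in B$ with $y_n \to y$, and this time apply Lemma~\ref{2b6} with the fixed element $x \in C$: the map $z \mapsto x \vee z$ is continuous on $\Cl(B)$, so $x \vee y_n \to x \vee y$. By the first step each $x \vee y_n$ lies in $\Cl(B)$, and closedness of $\Cl(B)$ again yields $x \vee y \in \Cl(B)$, as required.

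The essential point, and the only place where care is needed, is that the join $(x,y) \mapsto x \vee y$ is \emph{not} jointly continuous (Remark~\ref{2a65}), so one cannot simply approximate $x$ and $y$ simultaneously by members of $B$ and pass to the limit in $x_n \vee y_n$. What rescues the argument is that Lemma~\ref{2b6} supplies continuity of the join whenever one argument is held fixed in $C$; since $B \subset C$, such a fixed argument is available in each approximation step --- first $b \in B \subset C$, then $x \in C$ itself. Keeping track of this asymmetry (which element is frozen in $C$ at each stage) is the main thing to get right; the remainder is routine.
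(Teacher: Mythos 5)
Your proof is correct and takes essentially the same approach as the paper's: both apply Lemma~\ref{2b6} twice, once with the frozen element $y \in B \subset C$ to settle the case $y \in B$ (using that $z \mapsto y \vee z$ sends $B$ into $B$, hence $\Cl(B)$ into $\Cl(B)$), and once with the frozen element $x \in C$ to pass from $y \in B$ to arbitrary $y \in \Cl(B)$. The paper states this more tersely; your sequential spelling-out (legitimate by metrizability) is the same argument.
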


\begin{proof}
By \ref{2b6} it is sufficient to consider $ y \in B $. Applying
\ref{2b6} (again) to $ y \in B \subset C $ we see that the map $
\Cl(B) \ni z \mapsto y \vee z \in \La $ is continuous. This map sends
$ B $ into $ B $, therefore it sends $ x \in C \subset \Cl(B) $ into $
\Cl(B) $.
\end{proof}

\begin{lemma}\label{2b8}
For all $ x,y \in C $,
\[
x \vee y \in C \quad \text{and} \quad ( x \vee y )' = x' \wedge y' \,
.
\]
\end{lemma}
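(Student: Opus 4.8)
The plan is to show that $x' \wedge y'$ is the complement of $x \vee y$ in $\Cl(B)$; the uniqueness of the complement (Lemma \ref{2a6}) then yields both assertions at once via the definition \eqref{2b1} of $C$. Concretely, I must verify three things: that $x \vee y$ and $x' \wedge y'$ both lie in $\Cl(B)$, and that the two defining relations $(x \vee y) \wedge (x' \wedge y') = 0$ and $(x \vee y) \vee (x' \wedge y') = 1$ hold. Membership is immediate from tools already in place: since $x \in C$ and $y \in \Cl(B)$, Lemma \ref{2b7} gives $x \vee y \in \Cl(B)$; and $x' \wedge y' \in \Cl(B)$ by \eqref{2a4a}, because $x', y' \in C \subset \Cl(B)$. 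In particular $x \vee y$, $x' \wedge y'$, and also $y$ and $y'$, all lie in $\La_{x,x'}$ by \eqref{2b5}.

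The heart of the argument is to check the two lattice equations inside $\La_{x,x'}$. Here $x$ and $x'$ are independent (by \eqref{2b4} and \eqref{2a5a}), so Remark \ref{1d14} applies: any lattice relation among elements of $\La_{x,x'}$ is equivalent to the conjunction of its two restrictions obtained by applying the homomorphisms $z \mapsto z \wedge x$ and $z \mapsto z \wedge x'$ (see \eqref{1d16}). I would therefore split each equation into these two restrictions. Restricting to $x$, one has $(x \vee y) \wedge x = x$ and $(x' \wedge y') \wedge x = 0$ (using $x' \wedge x = 0$), so the two equations reduce to the trivial identities $x \wedge 0 = 0$ and $x \vee 0 = x$.

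Restricting to $x'$ is where the hypothesis $y \in C$ is used. By \eqref{1d16} one computes $(x \vee y) \wedge x' = (x \wedge x') \vee (y \wedge x') = y \wedge x'$, while $(x' \wedge y') \wedge x' = x' \wedge y'$. The first equation then reads $(y \wedge x') \wedge (x' \wedge y') = (y \wedge y') \wedge x' = 0$, which holds because $y \wedge y' = 0$; the second reads $(y \wedge x') \vee (y' \wedge x') = x'$, which holds because, again by \eqref{1d16}, the left side equals $(y \vee y') \wedge x' = 1 \wedge x' = x'$ since $y \vee y' = 1$. Thus both restrictions hold for each equation, and hence both equations hold in $\La$.

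The only delicate point is bookkeeping: one must confirm that every element entering these manipulations, namely $x \vee y$, $x' \wedge y'$, $y$, and $y'$, genuinely lies in $\La_{x,x'}$, so that the splitting of Remark \ref{1d14} is legitimate; this rests precisely on Lemma \ref{2b7} together with \eqref{2a4a} and \eqref{2b5}. Once the two defining relations are established, $x \vee y \in C$ by \eqref{2b1}, and the uniqueness asserted in Lemma \ref{2a6} forces $(x \vee y)' = x' \wedge y'$.
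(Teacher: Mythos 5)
Your proof is correct and follows essentially the same route as the paper's: membership via Lemma \ref{2b7} and \eqref{2a4a}, then splitting the two relations $(x\vee y)\wedge(x'\wedge y')=0$ and $(x\vee y)\vee(x'\wedge y')=1$ into their restrictions to $x$ and $x'$ using Remark \ref{1d14} and \eqref{1d16}, with the restriction to $x'$ reducing to $y\wedge y'=0$ and $y\vee y'=1$, and finally invoking \eqref{2b1} and the uniqueness from Lemma \ref{2a6}. The computations match the paper's almost line for line, so there is nothing to add.
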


\begin{proof}
By \ref{2b7}, $ x \vee y \in \Cl(B) $. By \eqref{2a4a}, $ x' \wedge y'
\in \Cl(B) $. We have to prove that $ ( x \vee y ) \wedge ( x' \wedge
y' ) = 0 $ and $ ( x \vee y ) \vee ( x' \wedge y' ) = 1 $. We do it
using \ref{1d14} (similarly to the proof of \ref{2b6}).

First, $ x,y,x',y' \in C \subset \Cl(B) \subset \La_{x,x'} $.

Second, we consider $ z = ( x \vee y ) \wedge ( x' \wedge y' ) $ and
get $ z \wedge x = ( x \vee ( y \wedge x ) ) \wedge ( 0 \wedge ( y'
\wedge x ) ) = 0 $. Similarly, $ z \wedge x' = ( 0 \vee ( y \wedge x'
) ) \wedge x' \wedge ( y' \wedge x' ) \le y \wedge y'= 0 $. Therefore
$ z = 0 $, that is, $ ( x \vee y ) \wedge ( x' \wedge y' ) = 0 $.

Third, we consider $ z = ( x \vee y ) \vee ( x' \wedge y' ) $ and get
$ z \wedge x = x \vee ( y \wedge x ) \vee ( x' \wedge y' \wedge x ) =
x $. Similarly, $ z \wedge x' = ( x \wedge x' ) \vee ( y \wedge x' )
\vee ( x' \wedge y' \wedge x' ) = ( y \wedge x' ) \vee ( y' \wedge x'
) = ( y \vee y' ) \wedge x' = x' $. Therefore $ z = x \vee x' = 1 $,
that is, $ ( x \vee y ) \vee ( x' \wedge y' ) = 1 $.
\end{proof}

In addition, $ (x \vee y)' = x' \wedge y' \in C $; applying it to $
x', y' $ we see that $ x \wedge y \in C $ for all $ x,y \in C $, and
so,
\begin{equation}
C \text{ is a sublattice of } \La \, .
\end{equation}
The lattice $ C $ is distributive by \eqref{1d16}, since $ C \subset
\Cl(B) \subset \La_{x,x'} $ for all $ x \in C $. Also, $ 0 \in C $, $
1 \in C $, and each $ x \in C $ has a complement $ x' $ in $ C $. It
means that $ C $ is a Boolean lattice, that is, a Boolean algebra.

\begin{proposition}
The noise-type completion of $ B $ (as defined by \ref{1d6}) exists
and is equal to $ C $.
\end{proposition}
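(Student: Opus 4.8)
The plan is to read the result off the structure already assembled for $C$, in two movements: show that $C$ itself is one of the noise-type Boolean algebras admitted in Definition \ref{1d6}, and then show that $C$ dominates every such algebra. Since Definition \ref{1d6} asks for the greatest noise-type Boolean algebra $C^{*}$ with $B\subset C^{*}\subset\Cl(B)$ (topological density), this is exactly what is needed.

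First I would confirm that $C$ qualifies. Lemmas \ref{2b6}--\ref{2b8} together with the distributivity paragraph just above already give that $C$ is a Boolean sublattice of $\La$ containing $0$ and $1$ and complemented within itself; the only remaining axiom of a noise-type Boolean algebra is that $x\wedge y=0$ should force $x,y$ to be independent. But this is exactly \eqref{2a5a} applied inside $\Cl(B)$, and $C\subset\Cl(B)$ by \eqref{2b2}. Hence $C$ is a noise-type Boolean algebra. Moreover $B\subset C$ by \eqref{2b2}, while the other inclusion $C\subset\Cl(B)$ of \eqref{2b2} says precisely that $B$ is topologically dense in $C$. So $C$ belongs to the family of algebras over which Definition \ref{1d6} takes a maximum.

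Next I would prove maximality. Let $C^{*}$ be any noise-type Boolean algebra with $B\subset C^{*}$ that is topologically dense, i.e.\ $C^{*}\subset\Cl(B)$; I claim $C^{*}\subset C$. Given $x\in C^{*}$, the Boolean structure of $C^{*}$ supplies a complement $y\in C^{*}$ with $x\wedge y=0$ and $x\vee y=1$, and $y\in C^{*}\subset\Cl(B)$. Thus $x\in\Cl(B)$ admits a complement lying in $\Cl(B)$, which is exactly the membership condition \eqref{2b1} for $C$; hence $x\in C$. Combining the two movements, $C$ is the greatest element of the family, so the noise-type completion exists and equals $C$.

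The substantive work is all upstream: that $C$ is closed under the Boolean operations and that a complement in $\Cl(B)$ is unique are the contents of Lemmas \ref{2b8} and \ref{2a6}. The present assembly has no real obstacle; the one point worth flagging is that maximality uses only that a competitor $C^{*}$ furnishes a complement \emph{inside} $\Cl(B)$---it is the noise-type axiom on $C^{*}$, not any finer hypothesis, that delivers this, so no separate argument about limits of complements (cf.\ the cautionary Remark \ref{2a65}) is needed.
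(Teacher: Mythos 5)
Your proof is correct and follows essentially the same route as the paper: verify that $C$ is itself a noise-type Boolean algebra with $B \subset C \subset \Cl(B)$ (using the preceding lemmas and \eqref{2a5a}), then observe that any competing noise-type Boolean algebra in which $B$ is dense lies in $\Cl(B)$, so each of its elements has a complement in $\Cl(B)$ and hence belongs to $C$ by \eqref{2b1}. The paper's own proof is just a terser version of this same argument.
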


\begin{proof}
Being a Boolean algebra satisfying \eqref{2a5a}, $ C $ is a noise-type
Boolean algebra, and $ B \subset C \subset \Cl(B) $. We have to prove
that $ C $ contains any other noise-type Boolean algebra $ C_1 $ such
that $ B \subset C_1 \subset \Cl(B) $. This is easy: each $ x \in C_1
$ has a complement $ x' $ in $ C_1 \subset \Cl(B) $, therefore $ x \in
C $ just by \eqref{2b1}.
\end{proof}

Theorem 1 follows immediately. Also Theorem 2 follows,
since its conditions are mirrored by the definition of $ C $.

\bigskip
\filbreak
{
\small
\begin{sc}
\parindent=0pt\baselineskip=12pt
\parbox{4in}{
Boris Tsirelson\\
School of Mathematics\\
Tel Aviv University\\
Tel Aviv 69978, Israel
\smallskip
\par\quad\href{mailto:tsirel@post.tau.ac.il}{\tt
 mailto:tsirel@post.tau.ac.il}
\par\quad\href{http://www.tau.ac.il/~tsirel/}{\tt
 http://www.tau.ac.il/\textasciitilde tsirel/}
}

\end{sc}
}
\filbreak

\end{document}